\documentclass[11pt]{article}
\usepackage{amsfonts, amsmath}
\usepackage{amssymb}
\usepackage{amsthm,amscd}
\usepackage{enumerate}
\usepackage{tikz}
\usepackage{lipsum}
\usepackage{subfig}
\usepackage{lineno}
\usepackage{doi}
%\linenumbers
%\usepackage{babelbib}
%\usepackage[pdftex]{graphicx}
%\usepackage[margin=25pt,font=small,labelfont=bf]{caption}
%\usepackage{epsfig}
%\usepackage[pdftex]{graphicx}
%\usepackage[margin=25pt,font=small,labelfont=bf]{caption}
\usepackage{caption}

\newtheorem{theorem}{Theorem}[section]
\newtheorem{corollary}[theorem]{Corollary}

\newtheorem{conjecture}[theorem]{Conjecture}

\numberwithin{equation}{section}
\numberwithin{figure}{section}

\newcommand{\R}{\mathbb R}

\newtheorem{lemma}{Lemma}%[equation]
\theoremstyle{definition}
\newtheorem*{remark}{Remark}%[equation]
%[equation]
\newtheorem{definition}{Definition}%[equation]

\def\p{{\bf p}}

\def\q{{\bf q}}

\usepackage[margin=1in]{geometry}

\author{Robert Connelly\thanks{Department of Mathematics, Cornell University, Ithaca, USA.} and Zhen Zhang\thanks{Yau Mathematical Sciences Center, Tsinghua University, Beijing, China.}}
\begin{document}
\title{Spiderwebs on the Sphere and an Isoperimetric Theorem}
\maketitle
\section{Introduction} \label{section:introduction}
Recently a result has been announced by Martin Winter \cite{Winter} that when one takes the cone over the vertices of a convex polytope in three-space from the interior of the polytope, the resulting framework is rigid. In other words, there is no continuous motion of the configuration preserving each edge length as well as the distance to the cone point. Indeed, the structure is rigid with the weaker constraint that each cone length can get no shorter (a strut constraint), and each edge length of the polytope can get no longer (a cable constraint). And, a similar result holds in all dimensions greater than three.

Here we present a stronger result in a more global (semi-global) setting, but for a much more restrictive class of polytopes, those that can be inscribed in a sphere, with some additional conditions, Theorem \ref{Theorem:tensegrity} in this paper. We show that it follows from classical isoperimetric theorems about curves on the unit sphere enclosing the maximum area on a hemisphere with a perimeter of given length (often called the ``isoperimetric inequality").  

The idea is inspired by results in the rigidity of tensegrity structures, where some vertices are pinned, but all the edges are cables that cannot be increased in length.  The structures discussed here look like spiderwebs, which, in this case, enclose a sphere, but cannot ``slip off'', and, when tightened, at least some part of it is rigid. We conjecture that the uniqueness of the spider web, in the semi-global setting, is also true, regardless of the circumscribed condition of the polytope.

\thanks{Thanks to ICERM at Providence from the National Science Foundation under Grant No. DMS-1929284 while the authors were in residence at the institute during the Geometry of Materials, Packings and Rigid Frameworks Conference Jan 29 - May 2, 2025, and for giving us the chance to spread the word about these results.}

\section{Review of Classic Isoperimetric Results}

There are many proofs of the isoperimetric inequality in the plane, which says that any simple closed curve with a fixed length in the plane bounds the most area when it is a circle. First, we state the analogous theorem for the surface of the $2$-dimensional sphere $\mathbb{S}^2$ with unit radius.   Later, we show the connection to Winter's problem.
\begin{lemma}\label{hemi}  If the length $l$ of a curve $C \subset \mathbb{S}^2$  is less than $2\pi$, then $C$ is strictly contained in an open hemisphere of $\mathbb{S}^2$.  
\end{lemma}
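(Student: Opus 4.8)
The plan is to use the classical two-point symmetrization argument. First I would parametrize $C$ by arc length and pick a point $p \in C$ together with the point $q \in C$ reached after travelling arc-length $l/2$ along $C$; this splits $C$ into two arcs $\gamma_1, \gamma_2$, each of length $l/2 < \pi$ (for a simple closed curve $p \ne q$, which is the case of interest). Since $d(p,q) \le l/2 < \pi$, the minimizing geodesic from $p$ to $q$ is unique; let $N$ be its midpoint, so that $d(p,N) = d(q,N) = \tfrac12 d(p,q) < \pi/2$. Let $H = \{x \in \mathbb{S}^2 : d(x,N) < \pi/2\}$ be the open hemisphere with pole $N$. The goal is to show $C \subset H$.

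The device is the rotation $\rho$ of $\mathbb{S}^2$ by angle $\pi$ about the axis through $N$ and $-N$. I would record three facts about $\rho$: (i) it is an isometry fixing $N$; (ii) because $N$ is the midpoint of the geodesic $pq$ and both endpoints lie at distance $<\pi/2$ from $N$, the points $p,q$ are symmetric about $N$ on the meridian through them, so $\rho(p) = q$ and $\rho(q) = p$; and (iii) $\rho$ acts on the equator $\{d(\cdot,N) = \pi/2\}$ as the antipodal map, so $d(x,N) = \pi/2$ forces $\rho(x) = -x$. Using (ii), the concatenation $\tilde C = \gamma_1 * \rho(\gamma_1)$ runs $p \to q \to p$ and is therefore a closed curve, of total length $\tfrac{l}{2} + \tfrac{l}{2} = l < 2\pi$.

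For the contradiction, suppose $C \not\subset H$. Then some point of $C$ has distance $\ge \pi/2$ from $N$, while $p$ and $q$ have distance $<\pi/2$; since $d(\cdot,N)$ is continuous along the connected curve $C$, the intermediate value theorem produces a point $r \in C$ with $d(r,N) = \pi/2$, and after relabelling the arcs we may assume $r \in \gamma_1$. By (iii), $\rho(r) = -r$, so the closed curve $\tilde C$ passes through the antipodal pair $\{r, -r\}$. The remaining ingredient is the elementary fact that any closed curve on $\mathbb{S}^2$ containing two antipodal points has length at least $2\pi$: it decomposes into two arcs joining $r$ to $-r$, each of length at least $d(r,-r) = \pi$. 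This gives $l = \mathrm{length}(\tilde C) \ge 2\pi$, contradicting $l < 2\pi$; hence $C \subset H$, which is the assertion.

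I expect the main obstacle to be the careful verification of the reflection identities in (ii) and (iii) and of the antipodal-pair length bound, together with the bookkeeping that $\tilde C$ is genuinely a closed curve of length $l$ (rather than the original $C$). The intermediate-value step needs only that $C$ is connected and that $p, q$ lie strictly inside $H$, both of which the construction guarantees.
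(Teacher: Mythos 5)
Your proposal is correct and follows essentially the same route as the paper: split $C$ at two points half the length apart, take the midpoint $N$ of the connecting geodesic, and use the $180^{\circ}$ rotation about $N$ to show no point of $C$ can reach the equator of the hemisphere centered at $N$. The only cosmetic difference is that you finish by contradiction (an equatorial point would give antipodal points on the doubled curve $\gamma_1 * \rho(\gamma_1)$ of length $l<2\pi$), whereas the paper bounds $d(p,N)$ directly via the spherical triangle inequality $d(p,\rho(p))\le a+b<\pi$; these are two packagings of the same estimate.
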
 

\begin{proof}
    Let $F_1$ and $F_2$ be two points on the curve that are exactly halfway along the curve $C$ in $\mathbb{S}^2$.  The shortest geodesic distance between $F_1$ and $F_2$ is less than $\pi$.  Let $N$ (the north pole) be the midpoint in $\mathbb{S}^2$ along the unique geodesic arc between $F_1$ and $F_2$, and let $H$ be the open half sphere whose center is $N$, and let $E$ (the equator) be the boundary of $H$, which is a geodesic circle in $\mathbb{S}^2$.  Let $p$ be any other point on $C$. Let $a$ be the distance between $p$ and $F_1$, while $b$ is the distance between $p$ and $F_2$. Note that $a+b<\pi$ since the corresponding lengths in $C$ are in an open curve of length at most $\pi$. See Figure \ref{fig:iso}.
\begin{figure}[!htb]
    \centering
        \includegraphics[width=0.4\textwidth]{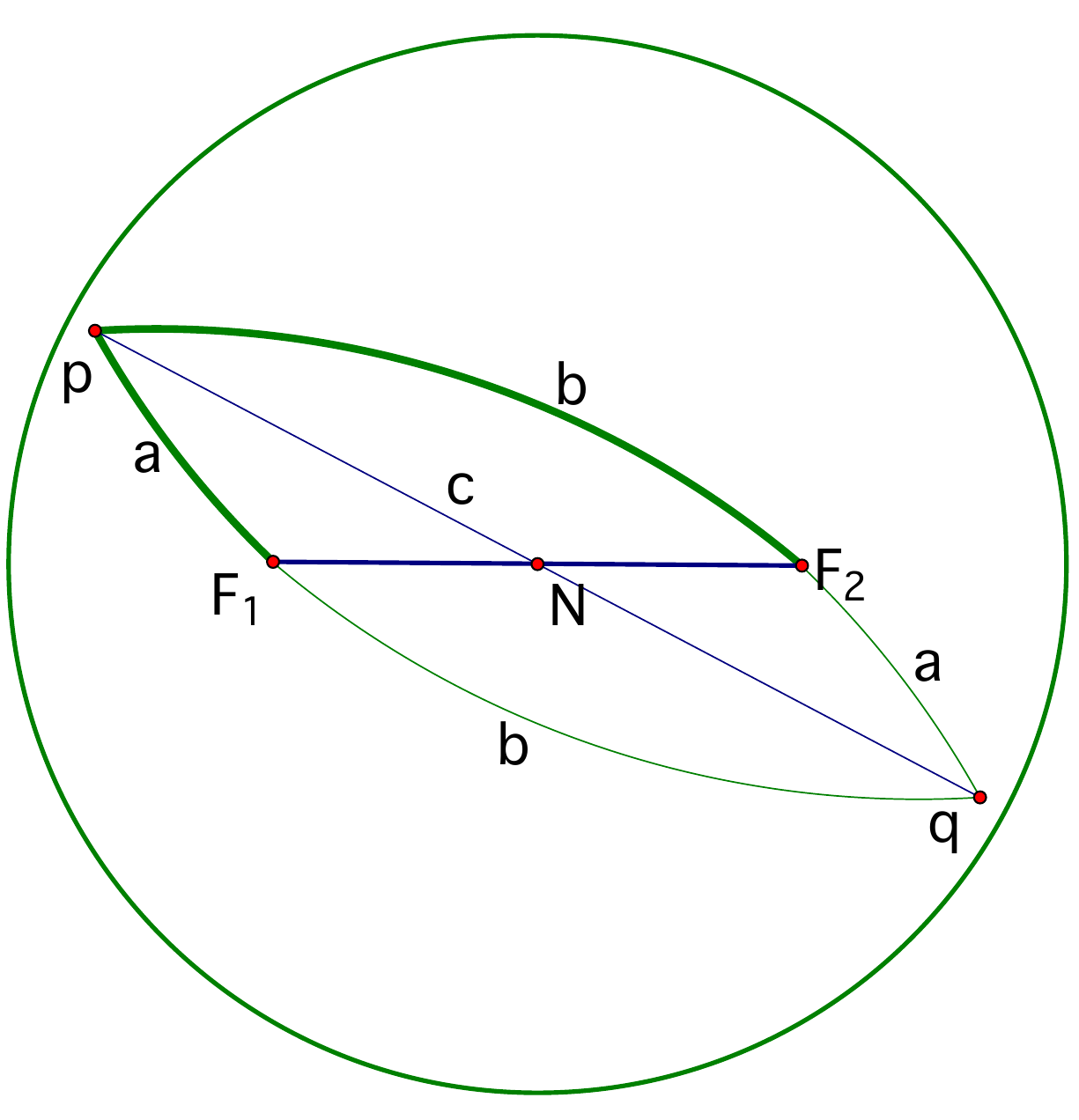}% 
        \captionsetup{labelsep=colon,margin=2cm}
         \caption{This is  a stereographic projection into the plane of the points and line segments defined in the proof of Lemma \ref{hemi} and Lemma \ref{lemma:closed-hemi}. Circles in $\mathbb{S}^2$ are projected into circles and lines in the plane.  The south pole (the point at infinity) is the only point in the sphere not projected to a point in the plane. }\label{fig:iso}
    \end{figure}

The idea is to show that the point $p$ lies in the open half space centered at the point $N$.  All the side lengths of the triangle $F_1, F_2, p$ have geodesic length less than $2\pi$.  Rotate this triangle $180^{\circ}$ about the north pole $N$ to get a congruent triangle $F_2, F_1, q$. So the points $p,N,q$ lie on a geodesic circle and the points $p, F_1, q$ form another triangle with side lengths $a, b, c$.  For fixed points $F_1, F_2$, the points $p$ form a spherical analogue to an ellipse in the plane with foci $F_1, F_2$.  When $p$ lies on the geodesic through $F_1, N, F_2$, then it is in the hemisphere $H$. Since $a+b < 2\pi$, by the spherical triangle inequality, they cannot lie on the equator $E$.  Thus, all the points $p$ must lie in $H$. \qed
\end{proof}
Using the same ideas as in Lemma \ref{hemi}, the following is also useful.  See Figure \ref{fig:iso} as well for this extension.

\begin{lemma}\label{lemma:closed-hemi} If the length $l$ of a curve $C \subset \mathbb{S}^2$ with end points $F_1$ and $F_2$, not antipodal, is such that $ l \le \pi$, where the mid point $N$ of the geodesic between $F_1$ and $F_2$ is the center of a hemisphere $H$, then any point $p \in C$ is contained in the closed hemisphere $H$.

\end{lemma}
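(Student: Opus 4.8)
The plan is to adapt the argument from Lemma \ref{hemi} almost verbatim, tracking how the strict inequality $a+b<\pi$ degrades to a non-strict one under the boundary hypothesis $l\le\pi$. First I would set up the same configuration: let $F_1,F_2$ be the (fixed, non-antipodal) endpoints, let $N$ be the midpoint of the geodesic arc joining them, and let $H$ be the open hemisphere centered at $N$ with equatorial geodesic circle $E$. For an arbitrary point $p\in C$, write $a=d(p,F_1)$ and $b=d(p,F_2)$ for the geodesic distances. The goal is to conclude $p\in \overline{H}$, the closed hemisphere.

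The key step is the distance estimate. Because $C$ has total length at most $\pi$ and $p$ splits $C$ into two subarcs from $p$ to $F_1$ and from $p$ to $F_2$, the geodesic distances satisfy $a+b\le \pi$ (the straight-line distances are bounded by the arc lengths, which sum to $l\le\pi$). I would then reuse the reflection trick from the previous lemma: rotate the spherical triangle $F_1F_2p$ by $180^\circ$ about $N$ to obtain a congruent triangle $F_2F_1q$, so that $p,N,q$ lie on a common geodesic circle and the foci $F_1,F_2$ act like the foci of a spherical ellipse. The locus of points $p$ with $a+b$ equal to a fixed constant is exactly such a spherical ellipse, and the critical case $a+b=\pi$ corresponds precisely to the ellipse that coincides with the equator $E$ of $H$. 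Monotonicity of the ellipse family in the parameter $a+b$ then gives that $a+b\le\pi$ forces $p$ to lie on or inside $E$, i.e.\ $p\in\overline{H}$.

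The one place requiring care, and the main obstacle, is making rigorous the claim that the spherical ``confocal ellipse'' with parameter $a+b=\pi$ is exactly the equator $E$ and that smaller values of $a+b$ give points strictly inside. In the strict setting of Lemma \ref{hemi} this was handled by invoking the spherical triangle inequality to rule out the equator; here I must instead verify the equality case directly. I would do this by checking it on the geodesic through $F_1,N,F_2$: a point $p$ on this geodesic lying on $E$ has $a+b=\pi$ exactly (the two arcs from the endpoints to the equator, through $N$, are supplementary), and then extend to all of $E$ by the rotational symmetry about $N$ already established, together with the monotone dependence of the confocal ellipse on its length parameter. This pins down $E$ as the level set $a+b=\pi$ and yields the closed inclusion $p\in\overline{H}$ for every $p\in C$.

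Since the hypothesis here is a non-strict inequality on the total length rather than a strict one, the conclusion is correspondingly the closed hemisphere, and no appeal to openness is needed; the boundary case $p\in E$ is genuinely attainable, for instance when $C$ is itself a length-$\pi$ geodesic arc with $p$ an endpoint of a diameter of $E$.
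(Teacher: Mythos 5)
Your proposal is correct and follows exactly the route the paper intends: the paper gives no separate proof of this lemma, saying only that it follows by ``the same ideas'' as Lemma \ref{hemi}, and your two steps --- the estimate $a+b\le\pi$ from comparing geodesic distances with the two subarc lengths, and the identification of the closed hemisphere $\overline{H}$ with the sublevel set $\{a+b\le\pi\}$ of the confocal family --- are precisely that adaptation, with the strict inequalities correctly relaxed.

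One small repair: the configuration $\{F_1,F_2\}$ is \emph{not} invariant under all rotations about $N$, only under the half-turn, so you cannot ``extend to all of $E$ by rotational symmetry'' as stated. But the half-turn already suffices: for $p\in E$ its half-turn image $q$ is the antipode of $p$, and since the half-turn swaps $F_1$ and $F_2$ one gets $d(p,F_2)=d(q,F_1)=\pi-d(p,F_1)$, so $a+b=\pi$ on all of $E$ directly. Alternatively, placing $N$ at the north pole with $F_1,F_2$ at colatitude $c<\pi/2$ gives the identity $\cos a+\cos b=2\cos c\,\cos\theta$ for a point at colatitude $\theta$, i.e.\ $\cos\tfrac{a+b}{2}\cos\tfrac{a-b}{2}=\cos c\,\cos\theta$ with $\cos\tfrac{a-b}{2}>0$ (as $|a-b|\le 2c<\pi$); hence $a+b\le\pi$ forces $\cos\theta\ge 0$, which is the closed-hemisphere conclusion without any monotonicity argument.
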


\bigskip

In the following, we assume the following isoperimetric theorem for continuous curves of length less than $2\pi$ in $\mathbb{S}^2$, the sphere of unit radius.  We have the tools to prove it for a quadrilateral, for example, and this proves the continuous case, but we will leave that aside here.

\begin{theorem}\label{classic-iso} Any closed continuous curve $C \subset \mathbb{S}^2$ of length $l \le 2\pi$ bounds the largest area in a containing half-sphere in $\mathbb{S}^2$ if and only if $C$ is a circle.
    \end{theorem}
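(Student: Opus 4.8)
The plan is to follow the route indicated just after the statement: reduce the continuous problem to a polygonal one and let the spherical quadrilateral play the role of Steiner's ``four-hinge.'' By Lemma \ref{hemi}, a curve of length $l<2\pi$ lies in an open hemisphere (the borderline $l=2\pi$, where the extremal curve is a great circle, I would treat by a limiting argument), so the enclosed region and its area are well defined and geodesics between nearby points are unique; this lets me speak of the geodesically convex hull inside the hemisphere. First I would dispose of convexity: if the region bounded by $C$ is not geodesically convex, replacing a non-convex sub-arc by the geodesic joining its endpoints strictly decreases length and strictly increases enclosed area, after which a small outward scaling restores the length and gains area. Hence any maximizer is convex, and I may assume $C$ is convex.

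The heart of the argument is the following reduction. Mark any four points $A,B,C,D$ on $C$ in cyclic order. They cut $C$ into four arcs, each of fixed length; regard each arc together with its chord as a rigid \emph{cap} of fixed area, rigidly attached along the chord. Since $C$ is convex,
\[
\text{area inside } C \;=\; \text{area}(ABCD) \;+\; \big(\text{sum of the four fixed cap areas}\big),
\]
where $ABCD$ is the spherical quadrilateral on the four chords. Flexing the four-bar linkage $ABCD$ keeps all four chord lengths fixed, hence keeps all four caps (and the total length of $C$) unchanged, while deforming $C$ into a new admissible curve of the same length. Consequently, at a maximizer the quadrilateral $ABCD$ must itself maximize area among spherical quadrilaterals with those four side lengths. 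Granting the spherical quadrilateral lemma below, $A,B,C,D$ are then concyclic. Since the four points were arbitrary and three (non-cogeodesic) points determine a unique circle, every point of $C$ lies on one fixed circle, so the convex curve $C$ is that circle; conversely the circle is, by this very argument, the unique maximizer, giving the ``if and only if.'' The fully continuous statement then follows by approximating $C$ from inside by geodesic polygons, applying the polygonal version, and passing to the limit as the cap areas tend to $0$.

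The remaining ingredient is the \emph{spherical quadrilateral lemma}: among spherical quadrilaterals with prescribed geodesic side lengths $a,b,c,d$, area is maximized exactly by the cyclic one. I would prove this variationally. A four-bar spherical linkage has one degree of freedom, which I parametrize by the length $e$ of the diagonal $AC$; splitting $ABCD$ into the triangles $ABC$ and $ACD$ and using the spherical law of cosines together with the spherical excess for area, the total area becomes a function of $e$ alone. Setting its derivative to zero and simplifying yields the condition that the angles of $ABCD$ at $B$ and $D$ are supplementary, which is precisely the spherical concyclicity condition; a second-variation (or convexity) check identifies this unique critical point as the strict maximum.

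The main obstacles I anticipate are, in order: (i) making the variational analysis of the quadrilateral clean, since spherical excess and the law of cosines produce heavier trigonometry than Bretschneider's planar formula, and one must confirm the single critical point is a strict maximum; (ii) justifying \emph{admissibility} of the flexed curves — that a small flex keeps $C$ simple and inside a hemisphere — which is where convexity together with Lemmas \ref{hemi} and \ref{lemma:closed-hemi} (for the half-arcs) are used, and which is only needed for small deformations at the maximizer; and (iii) the existence of a maximizer, which licenses the phrase ``at a maximizer'' and which I would obtain by a compactness argument for convex curves of bounded length in the closed hemisphere, using lower semicontinuity of length and continuity of enclosed area.
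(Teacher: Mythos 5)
First, note that the paper itself does not actually prove Theorem \ref{classic-iso}: the theorem is explicitly \emph{assumed} (``we will leave that aside here''), with only a remark that the tools exist to prove it via a quadrilateral. Your proposal is therefore an attempt to supply exactly the argument the paper gestures at --- the Steiner four-hinge reduction --- so in spirit you are on the intended route, and your checklist of the standard objections (existence of a maximizer by compactness, convexification, admissibility of small flexes of the linkage) is the right one.

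There is, however, a genuine error in your key ingredient, the spherical quadrilateral lemma. You assert that the vanishing of the first variation forces the angles at $B$ and $D$ to be \emph{supplementary}, and that this is the spherical concyclicity condition. That is the \emph{planar} condition, and it fails on the sphere. A concyclic spherical quadrilateral whose circumcenter lies inside decomposes into four isosceles triangles from the center, and summing base angles gives $\alpha+\gamma=\beta+\delta$, i.e.\ \emph{equal sums} of opposite angles, not sums equal to $\pi$: for a regular spherical quadrilateral with all four angles equal to $2\pi/3$ one has $\beta+\delta=4\pi/3\neq\pi$, yet it is cyclic and is the area maximizer for its side lengths. Carrying out your variational computation with a genuinely spherical area formula (for instance $\tan(E/2)=t\sin\gamma/(1+t\cos\gamma)$ with $t=\tan(a/2)\tan(b/2)$ for a triangle with two sides $a,b$ and included angle $\gamma$) does yield a correct critical-point condition, but it is not the one you wrote down; you must then separately prove that this condition is equivalent to concyclicity and that the critical point is unique and a maximum along the one-parameter family of linkage positions. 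Until that identification is made, the step ``critical $\Rightarrow$ the four marked points are concyclic'' --- the heart of the whole proof --- is unsupported. Two smaller points: in the four-point step you need the three reference points not to lie on a common great circle before ``three points determine a unique circle'' applies; and in the boundary case $l=2\pi$ the extremal curve is a great circle, which bounds the containing hemisphere rather than lying strictly inside it, so the limiting argument has to address that the conclusion there takes a slightly different form than for $l<2\pi$.
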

%Actually, that maximum area is $2\pi(1-\cos r)$, but this is not important for the time being.

\begin{lemma}\label{fixed}If a piecewise geodesic curve $C\subset \mathbb{S}^2$ has its vertices lying, in order, on the boundary of a circular disk of radius $r< \pi/2$, then among all curves with those same fixed lengths, the maximum area is achieved when and only when the vertices lie on a circle of the same radius $r$.    
\end{lemma}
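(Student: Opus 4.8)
The plan is to run the classical ``Steiner cap'' reduction, which converts the polygonal (piecewise geodesic) isoperimetric problem into the smooth one already recorded in Theorem \ref{classic-iso}. Write $\ell_1,\dots,\ell_n$ for the fixed geodesic edge lengths, and let $P_0$ denote the given configuration whose vertices lie, in order, on the circle $\partial D$ of radius $r<\pi/2$. First I would observe that the total perimeter $\sum_i \ell_i$ is fixed and strictly less than $2\pi\sin r<2\pi$: each $\ell_i$ is a geodesic chord of $\partial D$, hence shorter than the arc it subtends, and the arcs sum to the circumference $2\pi\sin r$ of $\partial D$. Consequently every competitor $P$ (a closed piecewise geodesic curve of the same total length) is strictly contained in an open hemisphere by Lemma \ref{hemi}, so that the area enclosed by each competitor, and by every auxiliary curve built below, is unambiguous and behaves like area in a geodesically convex region.

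Next comes the cap construction. For each edge, the geodesic chord together with the minor arc of $\partial D$ it subtends bounds a spherical segment (cap) $S_i$; these caps are determined up to congruence by $\ell_i$ and $r$ alone, and for $P_0$ they reassemble $\partial D$ into the full circle, giving $\mathrm{area}(P_0)+\sum_i \mathrm{area}(S_i)=2\pi(1-\cos r)$, the area of the disk $D$. For an arbitrary competitor $P$ I would glue a congruent copy of $S_i$ to the outside of the $i$-th edge, obtaining a closed curve $\Gamma$ assembled from $n$ circular arcs of the same lengths as before. Thus $\Gamma$ has total length exactly $2\pi\sin r<2\pi$, lies in an open hemisphere by Lemma \ref{hemi}, and, \emph{provided the caps do not overlap}, encloses area $\mathrm{area}(P)+\sum_i\mathrm{area}(S_i)$.

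Applying Theorem \ref{classic-iso} to $\Gamma$ then gives $\mathrm{area}(P)+\sum_i\mathrm{area}(S_i)\le 2\pi(1-\cos r)$, with equality if and only if $\Gamma$ is a circle; since $\Gamma$ has length $2\pi\sin r$ and lies in an open hemisphere, such a circle is necessarily a circle of radius $r$ carrying all the vertices of $P$. Subtracting the fixed cap area $\sum_i\mathrm{area}(S_i)$ yields $\mathrm{area}(P)\le \mathrm{area}(P_0)$, with equality exactly when the vertices of $P$ lie, in order, on a circle of radius $r$, which is the assertion.

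The main obstacle is the proviso emphasized above: for a non-convex competitor the outward caps may overlap or make $\Gamma$ self-intersect, so that $\mathrm{area}(P)+\sum_i\mathrm{area}(S_i)$ need no longer equal the region genuinely bounded by $\Gamma$. I would remove this difficulty by restricting attention to a maximizer. A maximizer exists by compactness of the space of edge-length-constrained configurations inside the fixed open hemisphere, and a standard pocket-flipping argument shows that any maximizer is convex: reflecting a reflex portion of $P$ out across the geodesic chord joining its endpoints preserves all edge lengths while strictly increasing area, a move that is legitimate precisely because the whole curve sits in an open hemisphere. For a convex $P$ the caps attach disjointly and $\Gamma$ is embedded, so the cap identity holds and the computation above applies, yielding simultaneously that the inscribed configuration is a maximizer and that it is the unique one up to isometry.
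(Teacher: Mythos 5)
Your proposal is correct and takes essentially the same route as the paper: rigidly attach the circular segments (caps) cut off by each chord of the circle of radius $r$ to the corresponding geodesic edges, observe that the assembled curve has the fixed length $2\pi\sin r<2\pi$, and invoke Theorem \ref{classic-iso} together with its equality case. The only real difference is that you explicitly address the cap-overlap problem for non-convex competitors by passing to a maximizer and pocket-flipping, a point the paper's one-line proof (``there is a fixed area difference between the two curves'') passes over silently.
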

\begin{proof} Note that the length of the piecewise geodesic curve is less than $2\pi$ as well, since the geodesic arc lengths are less than the corresponding circular lengths. 
Think of the circular arcs as being attached to the corresponding arcs rigidly with fixed edge lengths, as in Figure \ref{fig:Attachments}.  Then any other configuration of the piecewise geodesic  curve gives a corresponding configuration of the circular arcs, and there is a fixed area difference between the two curves. Then Theorem \ref{classic-iso} applies.\qed

This idea has been used, at least in the planar setting, in \cite{Niven-maxima}, for example. Effectively, the planar isoperimetric theorem, for polygons of fixed edge lengths, is equivalent to the continuous setting.  See the fuss about Steiner's proof, for example \cite{blaasjo2005isoperimetric}. 

\begin{figure}[!htb]
    \centering
    \includegraphics[width=0.5\textwidth]{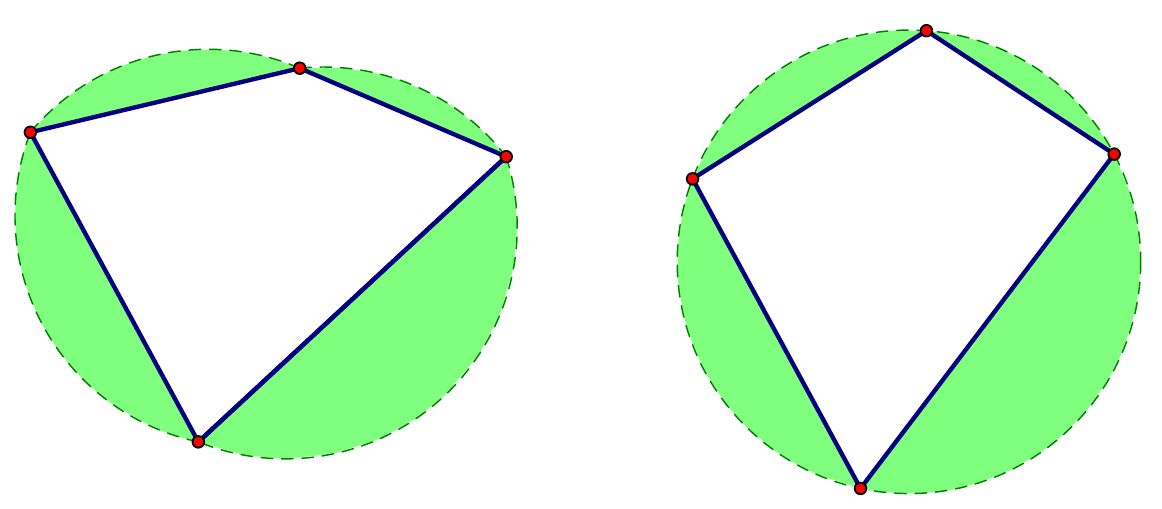}\captionsetup{labelsep=colon,margin=2cm}
         \caption{The green circular segments, the regions between the spherical geodesic segments and the bounding circle, are attached rigidly to the spherical polygon.  This is a picture of the analogous situation in the plane.
         }\label{fig:Attachments}
    \end{figure}
    
\end{proof}

 \section{The Tensegrity Case: Edge Lengths are Cables}   

In the classical continuous case, when the length of the individual edges of the curve become smaller (e.g. infinitesimally small), it is clear that the corresponding maximal area enclosed becomes smaller.  But for the piecewise-geodesic case, that is not always true.  It  depends on the position of the center of the circumcircle (that is the circle on the sphere through some three points)  of the vertices of the maximal area geodesic polygon, even for the case of a triangle.  For all geodesic polygons, all the geodesic arcs are the shortest arc, and thus less than $\pi$.  We will not consider pairs of vertices of the spherical polygon that are antipodal, with the exception of Lemma \ref{lemma:closed-hemi}.  Furthermore, we will call a closed spherical geodesic polygon \textit{proper} if each edge length is strictly less than $\pi$, the vertices do not lie on a great circle, and the sum of its edge lengths is strictly less than $2\pi$. Thus, by Theorem \ref{classic-iso} such a geodesic polygon is contained in an open hemisphere.  Furthermore, we say that such a proper geodesic polygon is \emph{internally circumscribed} if its vertices lie on a circle whose center lies in the closed polygon, and we say that the proper circumscribed geodesic polygon is \emph{strictly internally circumscribed} if the center of the polygon lies in the interior of the geodesic polygon. By Lemma \ref{hemi}, that boundary image is contained in an open hemisphere of $\mathbb{S}^2$. Thus one can extend the map of the boundary of the face to its interior, uniquely, topologically up to the homotopy class of the topological face.  All this together defines a continuous map from the surface of the polytope $P$ to $\mathbb{S}^2$.

    We call a proper circumscribed spherical polygon a \emph{Dido} polygon if the circumcenter lies in the (necessarily the midpoint) of one of the edges of the polygon. We call that longest edge, with the circumcenter in the middle, the \emph{Dido edge}.  This is in honor of Queen Dido of Carthage who solved an isoperimetric problem by the sea.  

    \begin{figure}[!htb]
    \centering
    \includegraphics[width=0.5\textwidth]{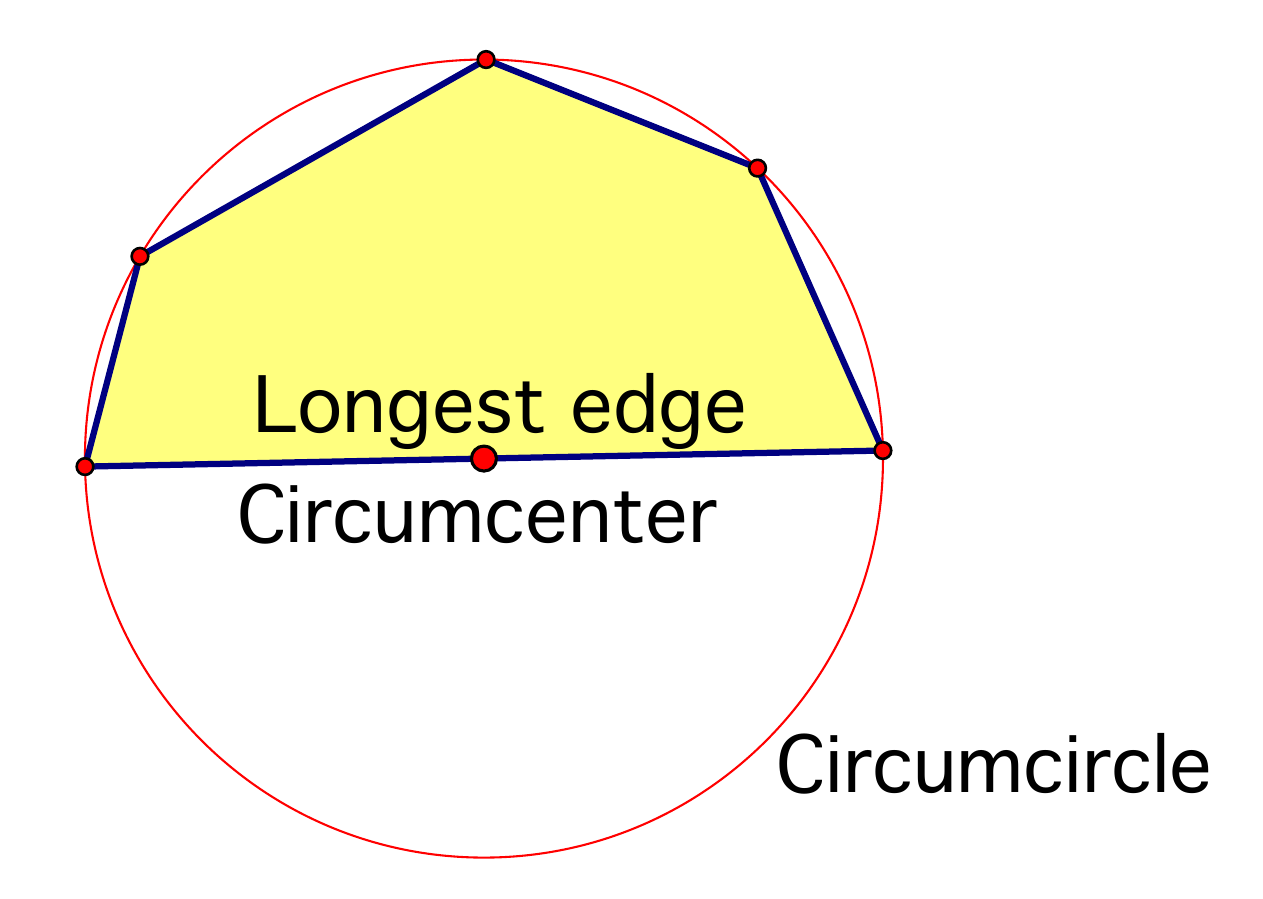}\captionsetup{labelsep=colon,margin=2cm}
         \caption{A Queen Dido polygon: If any single edge, but not the longest, is increased/decreased the area is increased/decreased. When the longest edge of the polygon is changed, fixing the rest, the area of the polygon is decreased.
         }\label{fig:Dido}
    \end{figure}

\begin{theorem}\label{Thm:main} In the space of proper circumscribed geodesic spherical polygons with only one edge allowed to vary, the area function is at a critical point, when and only when the edge is the Dido edge of a Dido polygon. In particular, increasing any non-Dido edge increases the area of the polygon, while decreasing the length of the longest edge, corresponding to when the circumcenter is strictly outside the polygon, increases the area of the polygon.
\end{theorem}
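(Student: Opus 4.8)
The plan is to reduce the statement to a first–variation computation for the area of a spherical polygon inscribed in a circle, organized around the ``pie–slice'' decomposition from the circumcenter. Put the circumcenter $O$ at the north pole and let the circumcircle have spherical radius $r<\pi/2$. Writing $\theta_i$ for the central angle subtended by edge $i$ and $e_i$ for its length, one has $\cos e_i=\cos^2 r+\sin^2 r\cos\theta_i$ and $\sum_i\theta_i=2\pi$. Joining $O$ to each vertex cuts the polygon into isoceles triangles with apex angle $\theta_i$ and legs $r$, so $\mathrm{Area}=\sum_i A(\theta_i,r)$ where, by Girard's formula together with a Napier relation for the base angle, $A(\theta,r)=\theta-\pi+2\arctan\!\big(\cot(\theta/2)/\cos r\big)$. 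The key bookkeeping point is that $A(\pi,r)=0$ and $A(2\pi-\psi,r)=-A(\psi,r)$, so this single signed expression simultaneously handles the circumcenter lying inside the polygon (all $\theta_i<\pi$), on edge $j$ ($\theta_j=\pi$, the Dido case), and strictly outside ($\theta_j>\pi$, where the corresponding slice is correctly subtracted).

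Next I would set up the one–edge family. Fixing $e_i$ for $i\neq j$ and keeping the polygon circumscribed leaves one degree of freedom, which I parametrize by $r$. Differentiating $\cos e_i=\cos^2 r+\sin^2 r\cos\theta_i$ gives $d\theta_i/dr=-2\cot r\,\tan(\theta_i/2)$ for $i\neq j$, and then $d\theta_j/dr=2\cot r\,S$ with $S=\sum_{i\neq j}\tan(\theta_i/2)>0$ from the closure relation $\sum_i\theta_i=2\pi$. I would also record $\partial A/\partial\theta=1-\cos r/D(\theta)$ and $\partial A/\partial r=\sin r\sin\theta/D(\theta)$, where $D(\theta)=1-\sin^2 r\sin^2(\theta/2)$. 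Assembling by the chain rule and repeatedly using the identity $\cos^2 r+\sin^2 r\cos^2(\theta/2)=D(\theta)$ — which forces the fixed–edge contributions to cancel cleanly against the $\partial A/\partial r$ terms — I expect the total variation to collapse to
\[
\frac{d\,\mathrm{Area}}{dr}=\frac{2\sin r\,\cos(\theta_j/2)}{D(\theta_j)}\Big[\sin(\theta_j/2)+\cos(\theta_j/2)\,S\Big].
\]

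The decisive step is to control the bracketed factor. Using $\theta_j/2=\pi-\sum_{i\neq j}\theta_i/2$, its vanishing is equivalent to $\tan\!\big(\sum_{i\neq j}\theta_i/2\big)=\sum_{i\neq j}\tan(\theta_i/2)$; but in the center–outside regime the half–angles $\theta_i/2$ ($i\neq j$) are positive and sum to less than $\pi/2$, so strict superadditivity of $\tan$ on $(0,\pi/2)$ — which follows by induction from $\tan(a+b)=\tfrac{\tan a+\tan b}{1-\tan a\tan b}>\tan a+\tan b$ — shows the bracket is strictly positive for every proper polygon. Hence $d\,\mathrm{Area}/dr$ has the sign of $\cos(\theta_j/2)$: positive for $\theta_j<\pi$, zero exactly at $\theta_j=\pi$, negative for $\theta_j>\pi$. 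Since $\theta_j=\pi$ is precisely the condition that $O$ be the midpoint of edge $j$ (so $e_j=2r$ is a diameter), this is the unique critical point and it is the Dido edge of a Dido polygon. To phrase the conclusion in terms of edge length I divide by $de_j/dr$ (computed the same way from the constraint); the bracket cancels between numerator and denominator and the variation simplifies to the memorable form $d\,\mathrm{Area}/de_j=\sin e_j\,\cot(\theta_j/2)/\big(2\cos r\,D(\theta_j)\big)$, whose sign is that of $\cot(\theta_j/2)$. Thus increasing a non–Dido edge ($\theta_j<\pi$) increases area, while for the longest edge with the circumcenter strictly outside ($\theta_j>\pi$) it is \emph{decreasing} $e_j$ that increases area, exactly as claimed.

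I expect the main obstacle to be the ``only when'' direction, namely ruling out the spurious zero of the bracket in the center–outside regime: without it one would find a second critical point, and this is exactly where the proper/circumscribed hypotheses must enter, through the superadditivity estimate. For $n=3$ this degenerates into the classical identity $\tan x_1+\tan x_2+\tan x_3=\tan x_1\tan x_2\tan x_3$ for angles summing to $\pi$, which forces a zero edge and hence an improper polygon; the superadditivity argument is the right generalization to all $n$. The trigonometric collapse to the $\cos(\theta_j/2)$ (respectively $\cot(\theta_j/2)$) factor is the other delicate point, but it is a finite simplification rather than a conceptual difficulty.
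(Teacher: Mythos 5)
Your proposal is correct in its main line and takes a genuinely different route from the paper. The paper argues synthetically: it first settles the triangle case via Lexell's theorem (the level curves of area with one side fixed are circles through the antipodes of that side's endpoints), treats the Dido case by reflecting across the Dido edge to obtain an $a,b,a,b$ quadrilateral inside the circumcircle, and then reduces a general polygon to the triangle spanned by the varying edge and one further vertex, restoring the remaining arcs by Lemma \ref{fixed} (the rigidly attached circular segments). Your pie-slice decomposition with the signed slice area $A(\theta,r)=\theta-\pi+2\arctan\big(\cot(\theta/2)/\cos r\big)$ replaces all of this with a single first-variation computation. I checked the pieces: $d\theta_i/dr=-2\cot r\tan(\theta_i/2)$, the identity $\cos^2 r+\sin^2 r\cos^2(\theta/2)=D(\theta)$, the collapse to $\frac{d\,\mathrm{Area}}{dr}=\frac{2\sin r\cos(\theta_j/2)}{D(\theta_j)}\big[\sin(\theta_j/2)+\cos(\theta_j/2)S\big]$, and the final $\frac{d\,\mathrm{Area}}{de_j}=\frac{\sin e_j\cot(\theta_j/2)}{2\cos r\,D(\theta_j)}$; they are all right. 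What your route buys is independence from Lexell's theorem and from the continuous isoperimetric Theorem \ref{classic-iso} (which the paper only assumes), together with an explicit quantitative rate; what it gives up is the geometric transparency of the reflection and attachment pictures.

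One caveat you should address explicitly. Your superadditivity argument controls the bracket only when the circumcenter is inside the polygon, on edge $j$, or beyond edge $j$ itself, so that the half-angles $\theta_i/2$, $i\ne j$, all lie in $(0,\pi/2)$. In the remaining regime, where the circumcenter lies beyond some \emph{other} edge $k\ne j$, one has $\theta_k>\pi$ and $\tan(\theta_k/2)<0$, and the bracket can in fact become negative and hence pass through zero (for instance a quadrilateral with $\theta_j$ small and $\theta_k$ a bit above $\pi$). At such a zero both $d\,\mathrm{Area}/dr$ and $de_j/dr$ vanish simultaneously, so $r$-critical points of the area are not automatically Dido configurations and $e_j$ momentarily fails to be a regular parameter. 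The conclusion survives because both quantities vanish to the same order and your formula for $d\,\mathrm{Area}/de_j$ extends continuously through these points with the sign of $\cot(\theta_j/2)$, but this needs to be said: the theorem's ``only when'' clause should be read with respect to the edge-length parameter $e_j$, not the circumradius parameter $r$.
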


We first mention the result that leads to the rigidity result.

\begin{corollary}\label{lemma:congruence}
If a proper spherical geodesic circumscribed  polygon $P$ has its circumcenter inside or on the boundary of $P$, and $Q$ is any other geodesic polygon, with corresponding edges no longer than the edges of $P$, then the area of $Q$ is no larger than the area of $P$, with equality if and only if $Q$ is congruent to $P$.
\end{corollary}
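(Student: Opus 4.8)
The plan is to combine the two tools already in hand: Lemma~\ref{fixed}, which says the circumscribed (cyclic) polygon maximizes area among all geodesic polygons with prescribed edge lengths, and Theorem~\ref{Thm:main}, which controls how the area of a circumscribed polygon changes when a single edge length varies. First I would reduce to the case in which $Q$ is itself circumscribed. Let $Q^{*}$ be the circumscribed polygon having the same edge lengths as $Q$; by Lemma~\ref{fixed}, $\mathrm{area}(Q)\le \mathrm{area}(Q^{*})$, with equality exactly when $Q$ is already circumscribed, i.e. $Q\cong Q^{*}$. Hence it suffices to prove $\mathrm{area}(Q^{*})\le \mathrm{area}(P)$ with equality iff $Q^{*}\cong P$; if $Q$ is not circumscribed the inequality is already strict, consistent with $Q\not\cong P$.

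Now both $P$ and $Q^{*}$ are circumscribed and each edge of $Q^{*}$ is no longer than the corresponding edge of $P$. I would pass from $P$ to $Q^{*}$ through circumscribed polygons, shortening the edges to their target lengths along the segment $\ell_i(t)=(1-t)\ell_i^{P}+t\,\ell_i^{Q^{*}}$. Writing $A(\ell_1,\dots,\ell_n)$ for the area of the circumscribed polygon with those edges, the derivative of the area along the path is $\sum_i \frac{\partial A}{\partial \ell_i}\,(\ell_i^{Q^{*}}-\ell_i^{P})$. Since every factor $\ell_i^{Q^{*}}-\ell_i^{P}\le 0$, the area is non-increasing as long as each partial $\partial A/\partial \ell_i$ is non-negative. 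By Theorem~\ref{Thm:main}, $\partial A/\partial\ell_i\ge 0$ precisely when the circumcenter is not strictly beyond edge $i$ (equivalently, edge $i$ is at or before its Dido configuration), and the hypothesis that the circumcenter of $P$ lies inside or on the boundary of $P$ places every edge of $P$ in exactly this regime at $t=0$.

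The hard part will be showing that we never leave this regime while shortening, since for an \emph{escaped} edge (one the circumcenter lies strictly beyond, necessarily the longest) Theorem~\ref{Thm:main} reverses the sign and shortening would \emph{increase} the area. Shrinking one edge decreases the circumradius and enlarges the central angles subtended by the others, so a priori the center could cross a different long edge. I would handle this with an extremal rather than a purely pathwise argument: maximize $A$ over the compact box of circumscribed polygons whose edge lengths lie between those of $Q^{*}$ and those of $P$. At an interior critical point every edge would have to be a Dido edge, impossible since the circumcenter can lie on at most one edge; and the boundary sign conditions, together with the facts that only a single (longest) edge can be escaped and that at the corner $P$ this edge sits at its upper value where $\partial A/\partial\ell\ge 0$ is forced, should pin the maximizer to $P$ itself. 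This is the step I expect to require the most care.

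Finally, for equality: if $\mathrm{area}(Q)=\mathrm{area}(P)$, then equality must hold at every stage. Thus $Q=Q^{*}$ is circumscribed, and no edge was strictly shortened, because shortening an edge with $\partial A/\partial\ell>0$ strictly lowers the area. Hence $Q$ has the same edge lengths as $P$ and is circumscribed, so the uniqueness clause of Lemma~\ref{fixed} (via Theorem~\ref{classic-iso}) gives $Q\cong P$.
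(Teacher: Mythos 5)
Your overall strategy --- an extremal argument over a compact family, using Lemma~\ref{fixed} to force the maximizer to be circumscribed and Theorem~\ref{Thm:main} to read off the signs of the area's partial derivatives with respect to the edge lengths --- is essentially the paper's. But the step you yourself flag as requiring ``the most care'' is a genuine gap, and your choice of feasible region is what creates it. By maximizing only over the box $\ell_i^{Q^*}\le \ell_i\le \ell_i^{P}$ you impose a lower bound on every edge, so the first-order conditions at a boundary maximizer leave two cases open that you do not close: (a) the one edge with $\partial A/\partial\ell_j\le 0$ could be an \emph{escaped} edge sitting at its lower bound $\ell_j^{Q^*}$ --- shortening it further would raise the area, but that move exits the box, so no contradiction follows; and (b) it could be a Dido edge at a value strictly between $\ell_j^{Q^*}$ and $\ell_j^{P}$, where the derivative vanishes and nothing in your sketch pushes it up to $\ell_j^{P}$. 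The observation that the sign is favorable at the corner $P$ does not rule out either configuration; you would still need an argument such as: since the circumcenter of $P$ lies inside or on the boundary of $P$, the length $\ell_j^{P}$ is at most the Dido length determined by the other edges, which then forces equality. There is also a smaller unaddressed point at the very start: the existence (and uniqueness of the relevant branch) of a circumscribed polygon $Q^{*}$ with the same edge lengths as an arbitrary $Q$ is not given by Lemma~\ref{fixed} as stated and itself requires the compactness argument.

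The paper dissolves both difficulties by taking the feasible set to be \emph{all} geodesic polygons with each edge no longer than the corresponding edge of $P$ --- no lower bounds at all. At a maximizer $Q$ of that problem: if the circumcenter lies outside $Q$, the longest edge can always be shortened (this is feasible), strictly increasing the area, a contradiction; hence the center is inside or on the boundary, every non-Dido edge must sit at its upper bound $\ell_i^{P}$ since otherwise it could be lengthened, and only the single possible Dido edge remains, which is handled by reflecting across it and invoking the fixed-edge-length isoperimetric statement. With that one-sided constraint set your reduction to $Q^{*}$ and the interpolation path become unnecessary, your sign analysis goes through verbatim, and the only remaining work is the Dido-edge case.
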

\begin{proof}
From compactness of the space of polygons, we know that, given the constraints of the polygon $P$, there must be a spherical geodesic polygon $Q$ with maximum area $Q$. By Lemma \ref{fixed}, $Q$ must be circumscribed as well.  If the circumcenter of $Q$ lies outside of $Q$, then the longest edge of $Q$ can be decreased to strictly increase its area, contradicting its optimality. Thus we assume that the circumcenter lies inside or on the boundary of $Q$.  If any edge, not the Dido edge, is not strictly less than the corresponding edge of $P$, it can be increased to strictly increase the area of $Q$, contradicting its optimality again.  If the remaining edge of $Q$ contains the circumcenter of $Q$, the reflection method as in Figure \ref{fig:iso} applies, as with Queen Dido's problem.  Thus $P$ and $Q$ are congruent by the uniqueness for the equality case. \qed
\end{proof}
\bigskip
\noindent {\bf Proof of Theorem \ref{Thm:main}:} We first start with the case of a triangle with side lengths $a, b, c$, where $c$ is the largest size. 

By a Theorem of Lexell, see \cite{enwiki:1248978010} and \cite{MR4405555}, we know that the only critical points for the area function of a spherical triangle, where the lengths $a,b$ are fixed, are when the area is at a maximum or minimum (counting the oriented area).  See Figure \ref{fig:Lexell} that shows this and a statement of Lexell's theorem.

 \begin{figure}[!htb]
    \centering
    \includegraphics[width=0.7\textwidth]{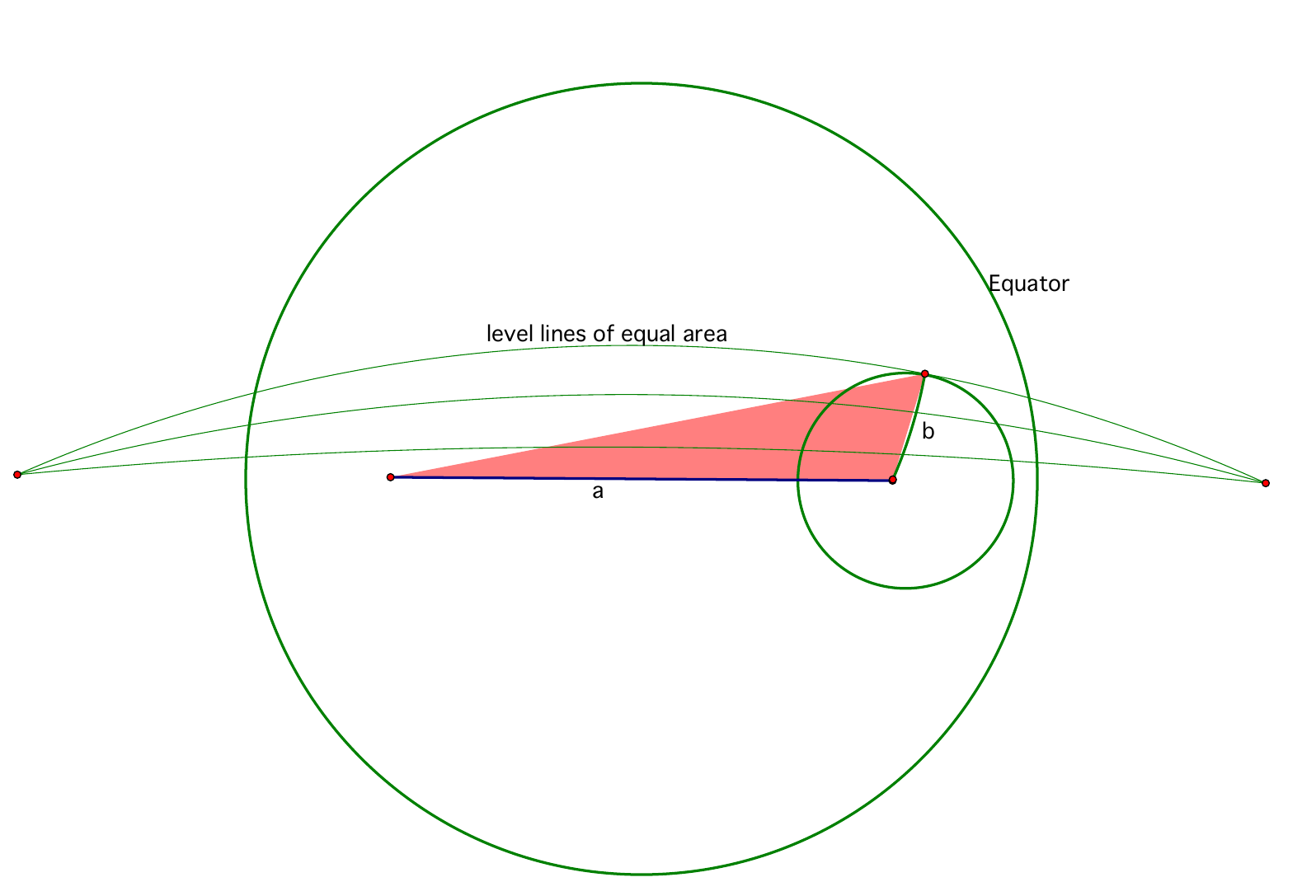}\captionsetup{labelsep=colon,margin=2cm}
         \caption{This shows the stereographic projection of  the level lines of the area of a geodesic triangle on the sphere with one fixed side of length $a$. They are circles, each of  which intersect the antipodes of the end points of the geodesic line segment of length $a$. 
         }\label{fig:Lexell}
    \end{figure}   

In the Queen Dido case, by reflecting about the line through $c$, we see that the resulting circumscribed quadrilateral with side lengths $a,b,a,b$ lies in the circumcircle about the center of $c$.  Thus by Theorem \ref{classic-iso}, $2a+2b < 2\pi$, thus $a+b < \pi$, and the Dido configuration is the only case when the area function is at a critical point. By rotating the $b$ edge about the edge $a$ we see that when the $c$ length is increased up to a maximum, where they lie on a geodesic with the circumcenter outside of the $a, b, c$ triangle. Thus as $c$ is increased from the Dido case, the area decreases and the circumcenter lies outside of the $a, b, c$ triangle on the $c$ side of the triangle.  

In the case that the $c$ is decreased, the circumcenter must lie on the inside of the $c$ side of the triangle.  It may lie outside of the triangle, but inside the $c$ side.

If the center of the circumscribed circle of the triangle lies strictly outside of the triangle, say on the $c$ side of the triangle, then $a+b < \pi$ and the previous analysis applies. Thus, decreasing the size of $c$ will increase the area of the triangle.

If the center of the circumscribed circle of the triangle lies inside the triangle, and thus not on the outside of the side $c$, then starting from the case when the $a$ and $b$ sides overlap we see that increasing the length of $c$ increases the area of the triangle by Lexell's Theorem until the circumcenter hits side $c$.

Next we consider the more general case of a spherical polygon circumscribed by a spherical circle.  If a side $c$ is to be increased, and the circumcenter is not outside $c$, then choose another point of the polygon, not one of the points of c, and consider the triangle formed by that point and the end points of $c$.  Fix those distances and increase the length of $c$, and extend it to both sides of the polygon.  This increases the total area of the polygon.  Then apply the isoperimetric Lemma \ref{fixed} to not decrease the area with those fixed lengths. See Figure  \ref{fig:Triangle-area} for non-Dido case.
 \begin{figure}[!htb]
    \centering
    \includegraphics[width=0.4\textwidth]{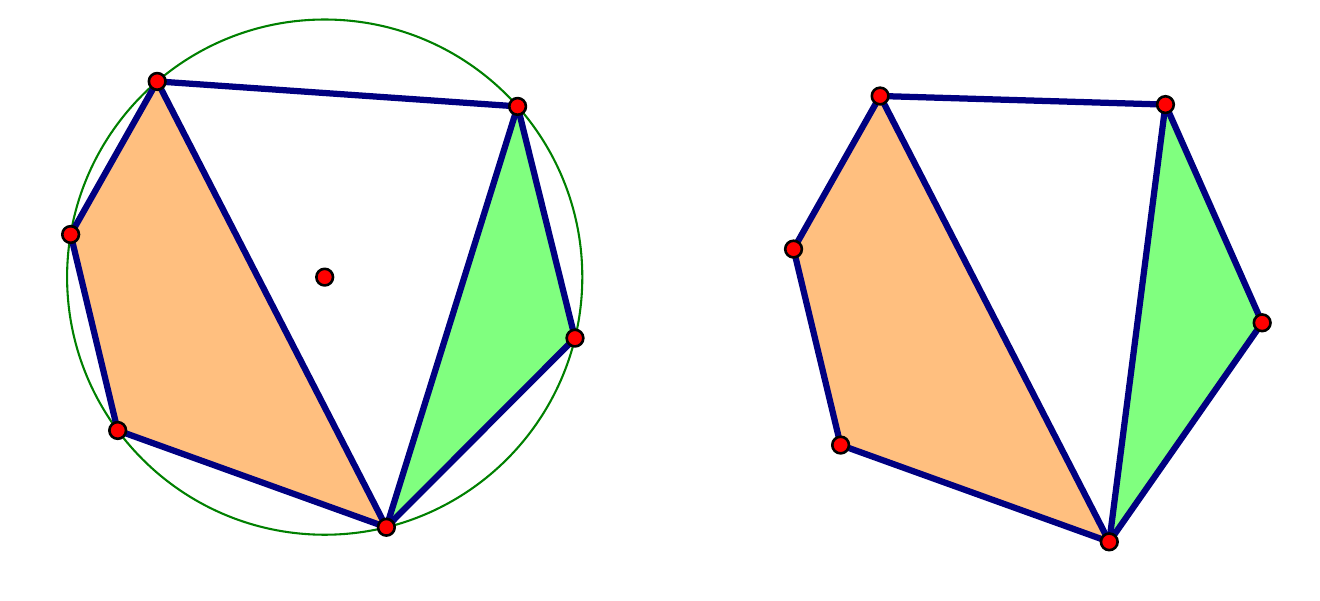}\captionsetup{labelsep=colon,margin=2cm}
         \caption{This shows that when a triangle has its circumcenter in its interior, then the length of the edge on the boundary of its containing polygon (on the top of the polygon here) determines  how the area of the whole polygon increases or decreases. The colored polygons have a fixed area in the two figures. When the circumcenter of a polygon lies inside the polygon, there always is a triangle, as in the figure, where decreasing the outer edge will decreas the area.
         }\label{fig:Triangle-area}
    \end{figure}

For the Dido case and when the circumcenter lies outside of the edges of the polygon, do the same as the internal case, but decreasing the length of $c$, increasing the area and applying Lemma \ref{fixed} again. \qed

 \section{The Connection to Rigidity: Spiderwebs on the Sphere}   

A result of Winter in \cite{Winter} shows that if one takes a convex polytope $P$, containing the origin $O$ in $\R^3$ and joins $O$ to each vertex of $P$ with a strut (that is not allowed to decrease its length) and forces the edges of $P$ to be cables (that are not allowed to increase in length), the resulting tensegrity is rigid in  $\R^3$.  It seems very likely that such a tensegrity is always pre-stress rigid, see \cite{Connelly-Guest} for a definition. It is also conjectured that some type of global rigidity should hold. In this paper, we will assume that the vertices from the center point are fixed at distance one. In other words, the vertices of $P$ lie on the unit sphere, and all the other possible configurations have their vertices on the unit sphere.

Here we propose a very special case of Winter's problem that has a very strong global property of rigidity that is analogous to the rigidity of convex polytopes as observed by Cauchy in 1813, as well as a spider web interpretation, where the conditions for the rigidity can be verified easily.  See \cite{Cauchy} and in \cite{Rigidity-Energy}.  In Winter's Theorem, it seems that the rigidity of the coned polytope is equivalent to the rigidity of the projection of the vertices and edges of the polytope into the unit sphere. The cable constraints are enforced by the corresponding geodesic distance constraints on the projected vertices. From the tensegrity point of view, here we are assuming that the edges from the center vertex is a bar rather than just a strut.   It turns out one can reverse the procedure to create a convex polytope from a rigid tensegrity on the surface of the sphere from implicit stresses that must exist from the rigidity on the sphere using Maxwell-Cremona Theory or other methods.  But that is not relevant to the results here. 

Suppose that $G$ is the graph of a convex $P$ polytope, whose vertices $p$ lie on the unit sphere $\mathbb{S}^2$ in $\R^3$. We assume that each pair of vertices of $p$ corresponding to an edge of $P$  is not antipodal. Extend this map of vertices of $P$  to its edges by a shortest geodesic arc in $\mathbb{S}^2$, which can be done uniquely since those vertices are not antipodal. This then defines a continuous map of the $1$-skeleton of $P$ to $\mathbb{S}^2$.  Next we assume that the sum  of  lengths of the geodesics corresponding to the boundary edges of each face of $P$ is strictly less than $2\pi$. By Lemma \ref{hemi}, the image of the boundary of each face of $P$ is contained in a hemisphere of $\mathbb{S}^2$.  Thus, the map can be extended to each face, uniquely topologically up to homotopy.  Call such a graph $G$ and configuration $p$ an \emph{edge-constrained} tensegrity $(G,p)$.
Thus, such an edge-constrained tensegrity $(G,p)$ has a well-defined topological degree as a topological map between topological spheres, $P$ and $\mathbb{S}^2$ (given their orientations).  Note that one can interpret the degree of the topological map as the total signed oriented area of the spherical polygons divided by $4\pi$.  

 With this in mind, we define a polytope $P=(p_1,\dots,p_n)$ is \emph{semi-globally rigid} with respect to a point $p_0$ in the interior of P, if every other realization, fixing $p_0$, $Q=(q_1,\dots,q_n)$ of $P$ with corresponding edge lengths, the cables, no longer (i.e. $|q_i-q_j| \le |p_i-p_j|$), and with corresponding strut lengths  $|q_i-p_0| \ge |p_i-p_0|$ from $p_0=0$ no shorter, and in the same homotopy class, then $Q$ is  congruent to $P$ by a rotation about $p_0$.

\begin{theorem}\label{Theorem:tensegrity}
   Let $P$ be a convex inscribed polytope in $\R^3$ (that is, all its vertices lie on a sphere) such that  the circumcenter of each face (the center of the circle through each vertex of the face) lies in its face or on the boundary of its face.  Then $P$ is semi-globally rigid. 
\end{theorem}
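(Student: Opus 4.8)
The plan is to reduce the global rigidity of the polytope $P$ to a face-by-face comparison using the topological degree as the bookkeeping device, and then apply Corollary~\ref{lemma:congruence} on each face. First I would observe that, since every pair of edge-vertices is non-antipodal and each face has edge-length sum strictly less than $2\pi$, both $P$ and any competitor $Q$ in the same homotopy class define continuous degree-one maps to $\mathbb{S}^2$; hence the total signed oriented area of the spherical faces of each equals $4\pi$. The key identity is therefore
\begin{equation}
\sum_{F} \operatorname{Area}(F_P) = 4\pi = \sum_{F} \operatorname{Area}(F_Q),
\end{equation}
where the sum runs over the faces $F$ of $P$, and $F_P, F_Q$ denote the corresponding spherical geodesic polygons in the two configurations.

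Next I would examine each face separately. By hypothesis every face $F_P$ of $P$ is a proper circumscribed spherical polygon whose circumcenter lies inside or on its boundary, so it satisfies exactly the hypotheses of Corollary~\ref{lemma:congruence}. The corresponding face $F_Q$ in $Q$ is another spherical geodesic polygon whose edge lengths are no longer than those of $F_P$ (the cable constraint $|q_i-q_j|\le|p_i-p_j|$ on the chord translates to a no-longer geodesic-arc constraint, since on a fixed sphere chord length is a monotone function of geodesic length). Corollary~\ref{lemma:congruence} then gives, face by face,
\begin{equation}
\operatorname{Area}(F_Q) \le \operatorname{Area}(F_P),
\end{equation}
with equality if and only if $F_Q$ is congruent to $F_P$. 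Summing over all faces and comparing with the degree identity forces equality in every term: if any single face had strictly smaller area, the total area of $Q$ would drop below $4\pi$, contradicting that $Q$ has degree one. Hence every face of $Q$ is congruent to the matching face of $P$.

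Finally I would assemble the individual face congruences into a global congruence of $Q$ with $P$. Here I would use that the faces meet along shared edges with matching lengths and that congruent spherical polygons sharing an edge can be aligned by a single rotation; proceeding from one face to its neighbors across the connected dual graph of $P$, one propagates a single global rotation of $\mathbb{S}^2$ that carries every $F_P$ onto $F_Q$, using non-degeneracy (the vertices do not lie on a great circle, so each face spans a genuine open region) to rule out reflections that would reverse orientation and violate the degree. Since the vertices lie on the unit sphere in both configurations and $p_0=0$ is fixed, this rotation is an isometry fixing $p_0$, giving the required congruence. I expect the main obstacle to be this last gluing step: making rigorous that the face-wise congruences are realized by one consistent global rotation (rather than independent rotations per face) requires care with orientations and with the possibility that the combinatorial face-to-face identification could a priori admit a reflection; the degree-one condition and the strict properness of the faces are what must be leveraged to exclude such anomalies, and the strut constraints $|q_i-p_0|\ge|p_i-p_0|$ (automatically equalities once all vertices lie on the unit sphere) play only a confirming role.
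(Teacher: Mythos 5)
Your proposal follows essentially the same route as the paper's proof: use the degree-one condition to fix the total spherical area at $4\pi$, apply Corollary~\ref{lemma:congruence} face by face to get $\operatorname{Area}(F_Q)\le\operatorname{Area}(F_P)$ with equality only for congruent faces, and conclude equality everywhere from the area sum. Your added care about the chord-to-geodesic translation and the final gluing of face-wise congruences into one global rotation actually fills in steps the paper leaves implicit, but the underlying argument is identical.
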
 

\begin{proof} Note that the topological degree of the possible realizations of the points $p$ is well-defined by the  length constraints of the edges, and the faces of $P$ are circumscribed by a circle since they lie in the intersection of the faces and the circumscribing sphere. By Corollary \ref{lemma:congruence}, in the configuration $(G,p)$, each face is at a maximum area, given the edge length constraints, and that maximum configuration is unique up to congruence. But the spherical sum of the areas of the faces for a unit sphere is $4\pi$. Thus each face of any other configuration $q$, satisfying the edge length constraints, is congruent to the corresponding face of $P$.  So $q$ is congruent to $p$. \qed
\end{proof}

\bigskip

We recast this property in the context of geodesics on the unit sphere.
\begin{definition}
Suppose that a $3$-connected planar graph $G$ is mapped continuously to the unit sphere $\mathbb{S}^2$ with its edges as geodesics.  We say this map is a \emph{tight} realization if each geodesic edge is less than $\pi$ and the length of the perimeter of each face of $G$ is less than $2\pi$. We say the map is \emph{weakly tight} when the ``less than" inequalities are replaced with ``less than or equal to".

A \emph{face} of $G$ is a non-separating simple closed curve in $G$.
\end{definition}

\begin{lemma}
    If a tight realization $p$ of $G$, as above,  is embedded, then any homotopy of tight realizations of $G$ corresponds to a homotopy class of $\mathbb{S}^2 \rightarrow \mathbb{S}^2$.  Then the degree of a tight realization is defined as the degree of the map of $\mathbb{S}^2$.
\end{lemma}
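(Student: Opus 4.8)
The plan is to manufacture, from each tight realization, a canonical continuous map of the topological sphere underlying $G$ into $\mathbb{S}^2$, to observe that its topological degree is defined, and then to check that the whole construction depends continuously on the realization, so that a homotopy of tight realizations induces a homotopy of sphere maps and the degree is a homotopy invariant. First I would fix the source surface: since $G$ is $3$-connected and planar it is the $1$-skeleton of a convex polytope, and its vertices, edges and faces give a CW structure on a topological sphere $\Sigma$. The hypothesis that $p$ is embedded guarantees that the combinatorial faces of $G$ (the non-separating simple closed curves) are realized as the connected regions complementary to the image $1$-skeleton, so this identification of $\Sigma$ with a sphere is faithful. A tight realization $p$ sends each vertex to a point and each edge to its shortest geodesic arc, which is unique because the edge length is $<\pi$, giving a map $p^{(1)}\colon\Sigma^{(1)}\to\mathbb{S}^2$ of the $1$-skeleton.

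Next I would extend $p^{(1)}$ over each $2$-cell. For a face $F$, its boundary is a closed geodesic polygon of length $<2\pi$, so by Lemma \ref{hemi} it lies in an open hemisphere $H$ whose center $N$ is the point produced in that lemma (the midpoint of the geodesic joining the two half-length points of the boundary curve). Because $H$ is an open hemisphere it is contractible, so the boundary map extends over $F$, and any two extensions with image in $H$ are homotopic rel $\partial F$ since $\pi_2(H)=0$; this uniqueness is what pins down the otherwise $\pi_2(\mathbb{S}^2)=\mathbb{Z}$ worth of ambiguity in filling a face. I would make a definite choice, namely the geodesic cone with apex $N$, sending the radial parameter $u\in[0,1]$ of $F$ to the point at fraction $u$ along the minimizing geodesic from $N$ to the boundary image; this is well defined precisely because the boundary stays inside the \emph{open} hemisphere of $N$. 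Carrying this out on every face yields a continuous map $\hat p\colon\Sigma\to\mathbb{S}^2$ of oriented closed surfaces, whose degree $\deg\hat p\in\mathbb{Z}$ is defined; when $p$ is embedded, $\hat p$ is homotopic to the embedding of $\Sigma$ onto the tiled sphere and hence has degree $\pm1$.

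Finally I would establish homotopy invariance, which is the content of the first assertion. Given a homotopy $p_t$, $t\in[0,1]$, through tight realizations, the arc-length parametrization of each face boundary varies continuously, so the half-length points and hence the centers $N_t(F)$ vary continuously in $t$; the strict inequalities in the definition of \emph{tight} keep each boundary curve inside the corresponding open hemisphere for all $t$, so the geodesic-cone filling depends continuously on $t$ and $(x,t)\mapsto\hat p_t(x)$ is a homotopy from $\hat p_0$ to $\hat p_1$. Thus a homotopy of tight realizations does correspond to a single homotopy class $\mathbb{S}^2\to\mathbb{S}^2$, and since degree is a homotopy invariant, $\deg\hat p_t$ is constant along the homotopy, so the degree is well defined on the path component of $p$ and equals the degree of the associated sphere map, as claimed.

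I expect the main obstacle to be exactly the continuity of the face-filling as $p$ varies: verifying that the hemisphere center $N_t(F)$ depends continuously on $p_t$ (which relies on uniqueness of the geodesic between the half-length points, guaranteed by the edges being $<\pi$, so that the midpoint and the $\log$ map at $N_t$ are single-valued) and that no boundary curve ever reaches the equator of its hemisphere during the homotopy. This is where the strictness built into \emph{tight}, as opposed to \emph{weakly tight}, is indispensable: a boundary curve touching the equator would let the geodesic cone degenerate and would permit the degree to jump, so the open-hemisphere conclusion of Lemma \ref{hemi} is doing the essential work.
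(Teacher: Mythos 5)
Your proposal is correct and follows essentially the same route as the paper's (very terse) proof: both rest on Lemma \ref{hemi} placing each face boundary in an open hemisphere, whose simple connectivity lets the map extend over the $2$-cells uniquely up to homotopy rel boundary, making the degree well defined. You simply supply the details the paper leaves implicit — the explicit geodesic-cone filling and the continuity of the hemisphere centers along a homotopy of tight realizations — which is a faithful elaboration rather than a different argument.
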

\begin{proof}By Lemma \ref{hemi} the interior of each face lies in an open hemisphere, and the same is true for the image. Thus, the map extends to the interior of each face since the open hemisphere is simply connected. \qed
\end{proof}

\bigskip

The connection to polytopes is the following.
\begin{lemma}\label{lemma:tight} The projection of the boundary of a convex polytope $P$ from a point $p_0$ in the interior of $P$ onto the unit sphere $\mathbb{S}^2$ about $p_0$ is tight.     
\end{lemma}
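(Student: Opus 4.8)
The plan is to verify directly the two defining conditions of a tight realization: that each edge of $P$ projects to a geodesic arc of length strictly less than $\pi$, and that the projected boundary of each face has total length strictly less than $2\pi$. Write the central projection as $\rho(x) = (x-p_0)/|x-p_0|$ for $x \in \partial P$. Since any segment of $\partial P$ together with the point $p_0$ spans a plane through $p_0$ which meets $\mathbb{S}^2$ in a great circle, $\rho$ carries each polytope edge onto a geodesic arc, and the geodesic length of the image of an edge $v_1v_2$ equals the angle subtended at $p_0$ by the directions $v_1-p_0$ and $v_2-p_0$. This angle equals $\pi$ precisely when $p_0$ lies on the open segment $v_1v_2$; but that segment lies in $\partial P$ while $p_0$ is interior, so $p_0$ is not on it and the geodesic length is strictly less than $\pi$. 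This disposes of the edge condition.

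For the face condition, fix a face $F$ lying in its supporting plane $\Pi$. Because $p_0$ is interior, $p_0 \notin \Pi$. Let $c$ be the foot of the perpendicular from $p_0$ to $\Pi$ and set $N = (c-p_0)/|c-p_0| \in \mathbb{S}^2$. For any $x \in F$ the vector $x-c$ lies in $\Pi$ and is therefore orthogonal to $c-p_0$, so $\langle x-p_0,\, c-p_0\rangle = |c-p_0|^2 > 0$; dividing by $|x-p_0|\,|c-p_0|$ gives $\langle \rho(x), N\rangle = |c-p_0|/|x-p_0| \ge \delta > 0$, where $\delta$ is positive by compactness of $F$. Hence the projected face $F^{*} = \rho(F)$ lies in a closed spherical cap strictly inside the open hemisphere centered at $N$ and bounded away from its equator. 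Moreover $F^{*}$ is spherically convex, being the intersection of the convex cone $\{p_0 + t(x-p_0): x\in F,\ t\ge 0\}$ with $\mathbb{S}^2$: any two of its points lie in an open hemisphere and are joined by a geodesic arc contained in the planar section of that cone.

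It remains to bound the perimeter of the convex spherical polygon $\partial F^{*}$. I would invoke the spherical Cauchy–Crofton formula, writing the length of $\partial F^{*}$ as $\tfrac14$ times the integral, over poles $u\in\mathbb{S}^2$ taken with the area measure, of the number of intersections of $\partial F^{*}$ with the great circle having pole $u$. Convexity forces this count to be at most $2$ for almost every $u$, which already yields perimeter $\le 2\pi$. Strictness follows because the great circle with pole $N$ is the equator, which misses the cap containing $F^{*}$; by continuity every great circle whose pole lies in a neighborhood of $\pm N$ also misses $F^{*}$, contributing zero over a set of positive measure, so the perimeter is strictly less than $2\pi$. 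Alternatively one may cite monotonicity of perimeter for nested convex bodies in a hemisphere, with strictness coming from the same gap $\delta$.

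I expect the perimeter bound to be the only genuine obstacle. The edge condition and the hemisphere containment are elementary inner-product computations, but the passage from ``contained in an open hemisphere'' to a \emph{strict} length bound really requires the convexity of $F^{*}$ — without it a curve in a hemisphere can be arbitrarily long — and the cleanest route to strictness is the positive-measure family of great circles disjoint from $F^{*}$ that the cap containment provides.
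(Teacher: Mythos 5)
Your proof is correct, but it reaches the crucial perimeter bound by a different route than the paper. For the edge condition you compute directly that the geodesic length equals the angle subtended at $p_0$, which is $\pi$ only if $p_0$ lies on the edge itself --- an impossibility since $p_0$ is interior; the paper instead gets this from the containment of the projected face in the open hemisphere cut out by the plane through $p_0$ parallel to the face plane. Both are elementary and essentially equivalent. The real divergence is in the bound on the perimeter of the projected face $F^{*}$: the paper argues by \emph{monotonicity of perimeter for nested spherically convex regions}, realized concretely by starting from the closed hemisphere $\bar{H}$ (perimeter $2\pi$) and successively ``shaving'' it with the great circles through the edges of $F^{*}$, noting that after the second shaving the perimeter strictly decreases. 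You instead invoke the spherical Cauchy--Crofton formula, bounding the intersection count of almost every great circle with $\partial F^{*}$ by $2$ via convexity, and extracting strictness from the positive-measure family of great circles (those with poles near $\pm N$) that miss the spherical cap containing $F^{*}$. Your integral-geometric argument is clean and makes the source of strictness transparent --- it is exactly the gap $\delta$ between the cap and the equator --- at the cost of citing Crofton; the paper's shaving argument is more elementary and self-contained but requires a little care to see where strict inequality enters. You correctly identify convexity of $F^{*}$ as the load-bearing hypothesis (a non-convex curve in a hemisphere can be arbitrarily long), and your observation that $F^{*}$ is spherically convex as the trace on $\mathbb{S}^2$ of the convex cone over $F$ with apex $p_0$ is exactly what both arguments need.
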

\begin{proof} The plane of each face $F$ of $P$ does not contain $p_0$.  The parallel plane through $p_0$ defines a geodesic circle in $\mathbb{S}^2$, which is the boundary of a hemisphere $H$ containing the projected vertices and edges of $F$ in its interior, where the  edges of $F$ project to shortest geodesics on $\mathbb{S}^2$. Thus each  geodesic has length less than $\pi$. 

\begin{figure}[!htb]
    \centering
    \includegraphics[width=0.3\textwidth]{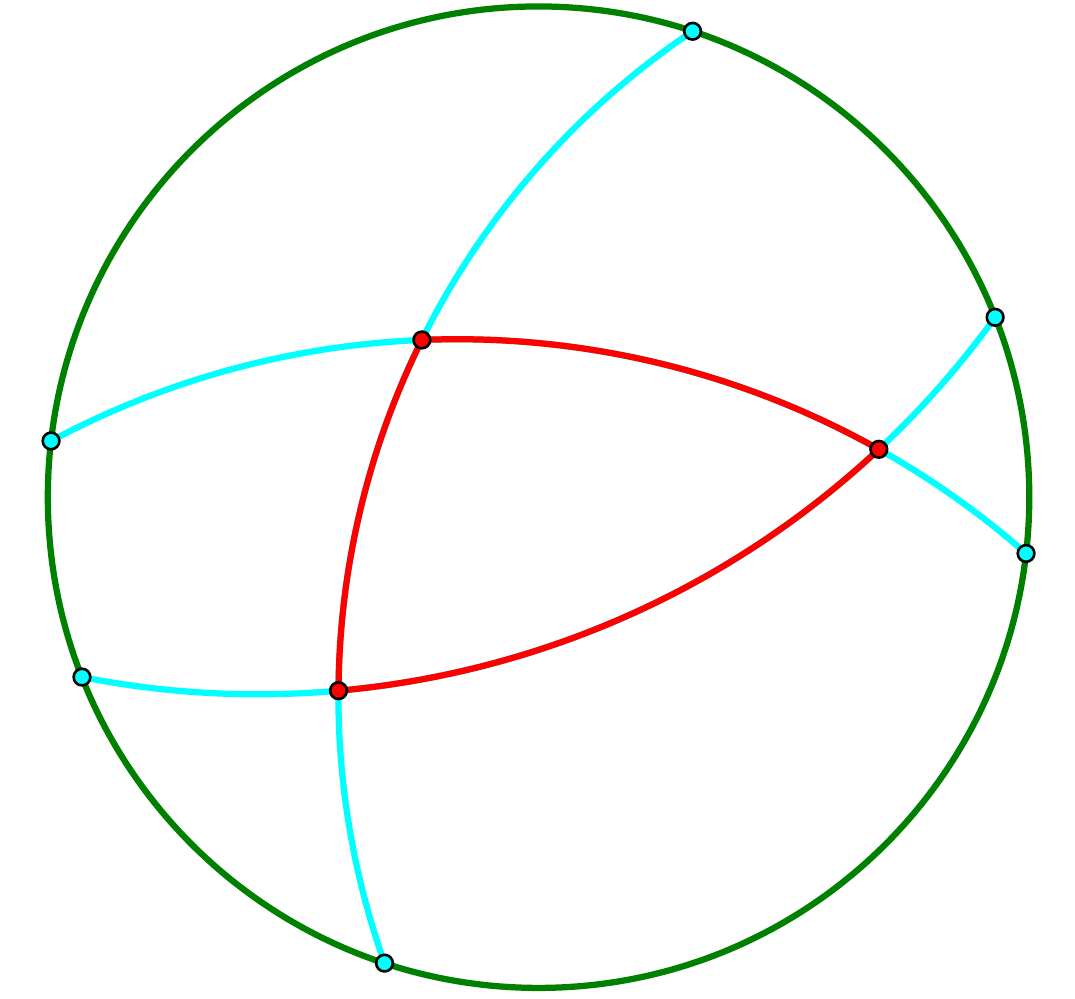}\captionsetup{labelsep=colon,margin=2cm}
\caption{This shows the stereographic projection of a geodesic polygon triangle in the image of a hemisphere.}\label{fig:Nested-convex}
    \end{figure}

To show that the length of the perimeter of each geodesic polygon is less than $2\pi$, we note that, at least in the plane, when one convex set strictly contains another, the perimeter of the larger polygon is greater than the perimeter of the smaller one.  The projection from $p_0$ of a face $F$ into the open hemisphere $H$ is spherically convex, and the closed hemisphere $\bar{H}$ is convex in the sense that between any pair of points in 
$\bar{H}$, there is a shortest geodesic between them in $\bar{H}$, but it may not be unique. Nevertheless, the length of the perimeter of $\bar{H}$ is $2\pi$ and the length of the perimeter of any polygon in $H$ is strictly less than $2\pi$.  Figure \ref{fig:Nested-convex} shows how this is seen.  For each edge of a geodesic polygon, extend it to a geodesic circle that intersects the boundary of geodesic of $H$ in two antipodal points.  The intersection of $H$ and the hemisphere containing the blue geodesic, and the polygon, is another spherical polygon containing the spherical polygon that is the image of $F$. The perimeter of this region of the sphere (a ``lune") still has perimeter $2\pi$.  But when this ``shaving" of $H$ is done again for another edge, the length of the perimeters of the resulting exterior polygon then decreases strictly until it gets to the final polygon, which is the image of $F$ in $H$. This shows that the projection of the edges of the graph of $P$ is tight. \qed
\end{proof}

\bigskip

Notice, also, that such a circumscribed polytope corresponds to a circle covering of the sphere, where the vertices correspond to the intersection of at least three circles as in Figure \ref{fig:skew-cube}
    \begin{figure}
\centering\includegraphics[width=0.5\textwidth]{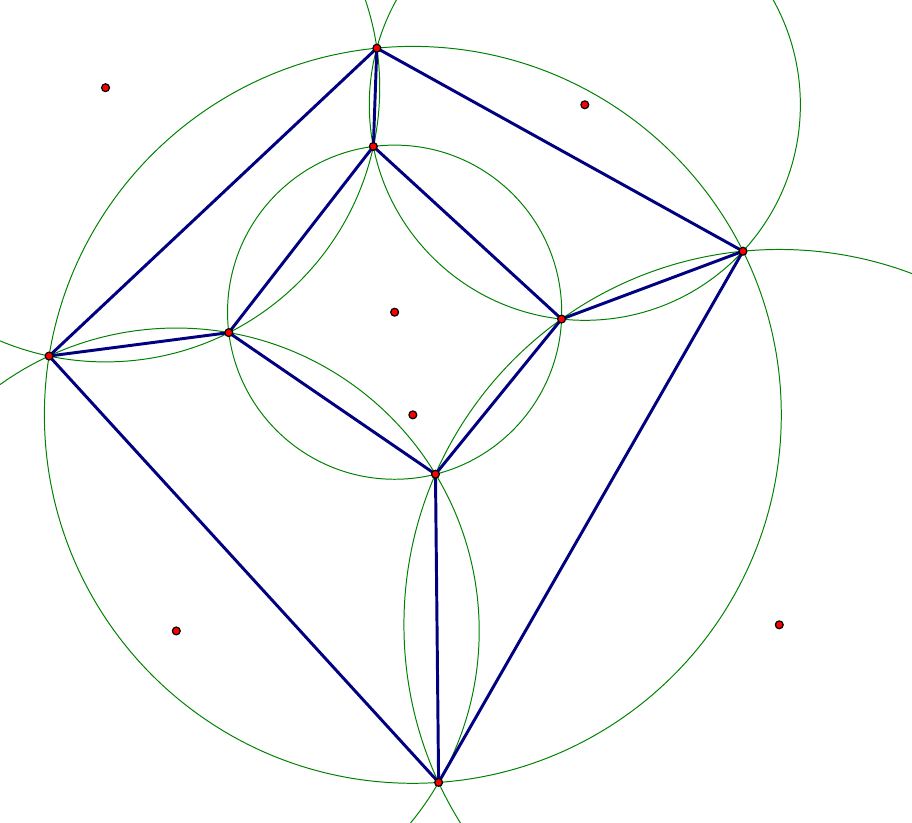}\captionsetup{labelsep=colon,margin=2cm}
         \caption{This shows a construction of the analogue of a polytope with the same graph as the cube, but in the plane.  If this is done in the sphere, we get a polytope with the same edge graph as the cube, with six flat quadrilateral planar faces.  Here the centers of the circles of the side quadrilaterals are outside the  quadrilateral faces.  The centers can be be made inside on the sphere.}
         \label{fig:skew-cube}
    \end{figure}   
 See \cite{Padrol2016} for information about which graphs correspond to the graphs of a circumscribed polytope.

In \cite{Winter}, it is shown that such a convex polytope corresponding to a tensegrity graph is rigid in $\R^3$ regardless of whether it is circumscribed.  But there is strong evidence that it is always pre-stress stable (see \cite{stress-flex} and \cite{Connelly-Guest} for the definition of pre-stress stable), which is quite natural. For the circumscribed case, a proof that it is pre-stress stable is coming.

    When a bar framework or tensegrity $(G,\p)$ is infinitesimally rigid, it implies that when there is another configuration $\q$, that is sufficiently close to $\p$ and  that satisfies the distance constraints, it is congruent to $\p$.  Often it is left at that, with no estimate, a-priori, of how close.  One important exception is Cauchy's original theorem about triangulated polyhedra (See \cite{Connelly-Guest} or \cite{Gluck} for more modern proofs.). In that case, one assumes that both configurations are convex, and that allows a reasonable estimate of how close $\q$ can be to $\p$.  In the case when the tensegrity $(G,|p)$ corresponds to a circumscribed polytope with circumcenters in their faces, as with Theorem \ref{Theorem:tensegrity}, one can make fairly strong estimates of how close the configuration $\q$ needs to be to $\p$.  For example, for a regular cube with vertices circumscribed in the unit sphere, if each of $q_i$ is within $\pi/4$ of $p_i$, each face of $Q$, when projected into the sphere, is inside of the hemisphere of the corresponding face of $P$.  So the degree of the corresponding map of the sphere is forced to be one.  Thus $Q$ is congruent to $P$ by a rotation. 

We mention one question as to whether this is \emph{always} a unique realization of a stressed graph on the unit sphere.

\begin{conjecture}\label{Conj:unique}  Given a vertex $3$-connected planar graph embedded in the unit radius sphere, such that each edge is a geodesic of length less than $\pi$ and each face has its perimeter of total length less than $2\pi$, and there is an equilibrium stress positive on each edge, then that  configuration is unique up to rotation, where each geodesic edge is constrained to not get larger in the class of degree one maps of the sphere to itself, where the map to the sphere is determined as an extension of the origonal sphere by extending the map on the origonal realization.
    \end{conjecture}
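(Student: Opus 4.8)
The plan is to keep the topological area invariant that drives the proof of Theorem~\ref{Theorem:tensegrity} and to let the positive equilibrium stress take over the role played there by the circumscribed hypothesis. First I would record what survives: restricting to degree-one maps, every competing tight realization $\q=(q_1,\dots,q_n)$ of $G$ has total signed spherical face area $4\pi$, just as $\p$ does, so $\sum_F \mathrm{Area}(\q|_F)=\sum_F \mathrm{Area}(\p|_F)=4\pi$. Each term is bounded above by the maximal (cyclic) area $M$ of a polygon with the edge lengths of $\q|_F$ (Lemma~\ref{fixed}, Theorem~\ref{classic-iso}), with equality exactly when $\q|_F$ is inscribed in a circle. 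In the inscribed setting of Theorem~\ref{Theorem:tensegrity} the faces of $\p$ already attain their maxima and $M$ is monotone in the edge lengths, so shrinking cables shrinks areas and equality propagates; dropping the circumscribed hypothesis, monotonicity fails on precisely the faces whose circumcenter lies beyond the longest edge---the regime isolated in Theorem~\ref{Thm:main}---so the area bound alone no longer closes and the stress must furnish the missing inequality.

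Next I would encode the stress $\omega_{ij}>0$ in the quadratic energy $E_\omega(\q)=\sum_{\{i,j\}\in E(G)}\omega_{ij}\,\lvert q_i-q_j\rvert^2$, whose weighted Laplacian (stress) matrix $\Omega$ is positive semidefinite since all $\omega_{ij}>0$. Spherical equilibrium means $\sum_{j\sim i}\omega_{ij}(p_j-p_i)=\mu_i\,p_i$ for scalars $\mu_i$; dotting with $p_i$ gives $\mu_i=\sum_{j\sim i}\omega_{ij}(p_i\cdot p_j-1)\le 0$, so the normal (radial) forces all point inward. The cable constraint $\lvert q_i-q_j\rvert\le\lvert p_i-p_j\rvert$ forces $E_\omega(\q)\le E_\omega(\p)$, while writing $q_i=p_i+u_i$ and expanding gives the exact identity
\[
E_\omega(\q)-E_\omega(\p)=(\q-\p)^{\top}\Omega\,(\q-\p)+2\sum_i \mu_i\,(1-p_i\cdot q_i),
\]
whose first term is $\ge 0$ because $\Omega\succeq 0$ and whose second term is $\le 0$ because $\mu_i\le 0$ and $1-p_i\cdot q_i\ge 0$.

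The main obstacle is this sign clash. In the classical Euclidean self-stress argument one has $\Omega\p=0$, the cross term disappears, and $E_\omega(\q)\le E_\omega(\p)$ together with $\Omega\succeq 0$ instantly forces $\q-\p\in\ker\Omega$. Here the sphere generates genuine inward radial forces $\mu_i<0$ whose contribution points the wrong way, so the energy inequality alone cannot conclude $\q\cong\p$---indeed it would permit configurations that collapse toward a point. My proposal is to bring in the area invariant of the first step as the second, independent inequality: combine $\sum_F\mathrm{Area}(\q|_F)=4\pi$ with the per-face bounds $\mathrm{Area}(\q|_F)\le M$ to produce a lower bound on $\sum_i\mu_i(1-p_i\cdot q_i)$ that is incompatible with the upper bound above unless both terms in the identity vanish, i.e.\ $\q-\p\in\ker\Omega$ and $p_i\cdot q_i=1$ at every vertex with $\mu_i\neq 0$.

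Finally I would identify $\ker\Omega$. Using $3$-connectedness of $G$ and positivity of $\omega$---the spherical Maxwell--Cremona correspondence noted after Theorem~\ref{Theorem:tensegrity}, which realizes $\p$ as the central projection of a convex polytope---I expect $\Omega$ to have corank equal to the dimension of the trivial motions, so that $\q-\p\in\ker\Omega$ forces $\q$ to be a rotation of $\p$, as required. The genuinely nonroutine step, and the reason the statement is only conjectured, is the reconciliation in the previous paragraph: turning the degree-one $4\pi$ constraint into a quantitative bound on the indefinite radial term, thereby showing that the topology forbids exactly the collapse the stress energy would otherwise allow. This is precisely the work the circumscribed hypothesis performed in Theorem~\ref{Theorem:tensegrity}, and replacing it by positivity of the stress is the crux.
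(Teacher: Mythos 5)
This statement is presented in the paper as an open conjecture: no proof is given, and the authors explicitly say only that they have found no counterexample. Your proposal is therefore not being measured against a hidden argument --- and, as you yourself concede in the final paragraph, it is not a proof either. The admitted gap is real and is the whole difficulty: you never exhibit a mechanism for converting the degree-one constraint $\sum_F \mathrm{Area}(\q|_F)=4\pi$ into a quantitative lower bound on the radial term $\sum_i \mu_i(1-p_i\cdot q_i)$. Without the circumscribed-with-interior-circumcenter hypothesis, Theorem~\ref{Thm:main} shows that the per-face area bound is genuinely non-monotone in the edge lengths (shortening the long edge of a face whose circumcenter lies outside it \emph{increases} the maximal area), so the face-by-face maximality argument of Theorem~\ref{Theorem:tensegrity} cannot be patched by the stress without a new idea; naming the obstacle is not the same as overcoming it.

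There is also a structural flaw in the proposed endgame. Take $\q=R\p$ for a nontrivial rotation $R$: this is a legitimate competitor, congruent to $\p$, satisfying every constraint with equality, so $E_\omega(\q)-E_\omega(\p)=0$. Yet $\q-\p=(R-I)\p$ is not a constant vector, hence not in $\ker\Omega$ (for a connected graph with positive weights that kernel consists exactly of the constant vectors), and $1-p_i\cdot Rp_i>0$ at vertices moved by $R$. So in your identity the first term is strictly positive and the second strictly negative; they cancel rather than vanish. Consequently the conclusion you aim for --- ``both terms in the identity vanish'' --- is false even in the trivial case the conjecture must allow, and the correct equality analysis would have to characterize exactly when cancellation occurs, which is a substantially harder problem than identifying $\ker\Omega$. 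A further small point: $\ker\Omega$ corresponds to translations, not rotations, so even if you could force $\q-\p\in\ker\Omega$ together with $p_i\cdot q_i=1$ you would conclude $\q=\p$ exactly, which is too strong and again signals that the decomposition is not aligned with the symmetry group of the problem.
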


    So far we have not found any counter-example.  All the examples of such spider webs, we can show, by one method or another, that the configuration is unique up to rotation in its homotopy class, or we don't know whether it is unique in its homotopy class.

In the spirit of finding semi-global geodesic configurations, for each $3$-connected embedded graph $(G,\p)$ on the unit sphere, whose topological degree is one, and where the length of each facial perimeter is less than $2\pi$, we define a partial order $\le$.  We say that $(G,\p) \le (G,\q)$ if, for each edge $e$ of $G$, the geodesic length of $e$ in $\p$ is less than or equal to the geodesic length of the edge $e$ in $\q$.

\begin{conjecture}\label{Conj:degree}
Any locally minimal $(G,\p)$ is unique in its homotopy class up to rotation with respect to the partial order.
\end{conjecture}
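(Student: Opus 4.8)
The plan is to recast local minimality as the existence of a self-stress and then to run Connelly's energy method. First I would work on the manifold $M$ of tight degree-one embeddings of $G$ into $\mathbb{S}^2$ modulo rotations; on a connected component the total signed area is the topological invariant $4\pi$, so it is constant and imposes no differential constraint. On $M$ the edge-length map $L=(\ell_e)_{e\in E}$ is smooth, and its differentials $d\ell_e$ are the covectors dual to the geodesic edge directions, i.e. to the tangential unit vectors $u_{e,v}$ at the endpoints. A configuration $\p$ is locally minimal for the partial order exactly when it is a local Pareto minimum of $L$, i.e. there is no tangent vector $v\in T_\p M$ with $d\ell_e(v)\le 0$ for all $e$ and $<0$ for some $e$. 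To first order, the Gordan--Motzkin alternative makes this equivalent to the existence of weights $\omega_e\ge 0$, not all zero, with $\sum_e \omega_e\, d\ell_e = 0$ on $T_\p M$; unwinding the identification of $d\ell_e$ with edge directions, this is precisely the vertex equilibrium $\sum_{e\ni v}\omega_e\, u_{e,v}=0$, a nonnegative equilibrium self-stress. This is the bridge to Conjecture \ref{Conj:unique}, whose hypothesis is exactly such a (strictly positive) stress.

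Second I would upgrade and transport this stress. Since every geodesic edge has length strictly less than $\pi$ by tightness, the chordal length $c_e = 2\sin(\ell_e/2)$ satisfies $dc_e = \cos(\ell_e/2)\,d\ell_e$ with $\cos(\ell_e/2)>0$, so $\tilde\omega_e := \omega_e/\cos(\ell_e/2)$ is a nonnegative equilibrium stress for the bar framework $(G,\p)$ in $\R^3$ with chordal lengths. Using the $3$-connectivity of $G$ and the embeddedness of $\p$, I would argue that $\tilde\omega$ may be taken strictly positive on all edges; the geometric content here is that a positive boundary tension on a face is consistent with equilibrium only when the circumcenter lies inside that face, so this is where the hypothesis of Theorem \ref{Theorem:tensegrity} is recovered intrinsically rather than assumed. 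Forming the associated stress (weighted Laplacian) matrix $\Omega$, equilibrium gives $\Omega\p=0$, and the central claim is that $\Omega\succeq 0$ with $\operatorname{rank}\Omega = n-4$, its kernel spanned by the constant function and the three coordinate functions. This is the spherical analogue of the Maxwell--Cremona/Tutte fact that a positive self-stress on a convex planar framework yields a positive-semidefinite stress matrix of rank $n-3$; on the sphere it should follow from realizing $\tilde\omega$ as the projected stress of the convex polytope supplied by Winter's construction \cite{Winter}, with convexity translating into the sign of $\Omega$.

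Third, granting the positive-semidefinite stress matrix, uniqueness is the standard energy argument. Define $E_\Omega(\q)=\sum_e \tilde\omega_e\,|q_i-q_j|^2 = \operatorname{tr}(P_\q^{\top}\Omega\,P_\q)$, where $P_\q$ lists the coordinates of $\q$. Then $E_\Omega\ge 0$ on all configurations because $\Omega\succeq 0$, while $E_\Omega(\p)=0$ because $\Omega\p=0$. For any competitor $\q$ in the homotopy class whose edges are no longer than those of $\p$ (the cable/partial-order constraint), positivity of $\tilde\omega$ gives $E_\Omega(\q)\le E_\Omega(\p)=0$, hence $E_\Omega(\q)=0$. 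This forces the coordinate functions of $\q$ into $\ker\Omega$, so $\q$ is an affine image of $\p$, and it forces equality $|q_i-q_j|=|p_i-p_j|$ on every edge. An affine map carrying the (full-dimensional) vertex set of $\p$ on $\mathbb{S}^2$ to points of $\q$ on $\mathbb{S}^2$ while preserving all edge lengths must be a rotation, so $\q$ is congruent to $\p$, which is the asserted semi-global rigidity.

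The hard part will be the positive-semidefiniteness and rank of $\Omega$ in the second step, i.e. the spherical Maxwell--Cremona statement, together with the upgrade from a merely nonnegative Pareto stress to a strictly positive one of full corank. The delicate sub-case is precisely the Dido/boundary situation of Theorem \ref{Thm:main}, where a face's circumcenter lies on one of its edges: there the area is only critical rather than a strict maximum and the corresponding stress coefficient degenerates, so $\Omega$ may lose rank and the energy argument yields only an affine family. Showing that local minimality in the partial order actually excludes these degeneracies --- equivalently, that a genuine flex would violate Pareto-minimality by permitting a simultaneous shortening --- is the crux separating Conjecture \ref{Conj:degree} from the already-established Theorem \ref{Theorem:tensegrity}.
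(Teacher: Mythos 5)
This statement is left as an open conjecture in the paper --- the authors explicitly say they have found no counterexample but can only verify uniqueness example by example --- so there is no proof of record to compare against; what matters is whether your argument closes the gap, and it does not. The decisive obstruction is your second step. A geodesic (tangential) equilibrium stress on the sphere does not become an equilibrium stress for the chordal bar framework $(G,\p)$ in $\R^3$ after the rescaling $\tilde\omega_e=\omega_e/\cos(\ell_e/2)$: the chordal forces at each vertex sum to a nonzero \emph{radial} vector, not to zero. Indeed an all-positive equilibrium stress on $(G,\p)$ alone is impossible, since then $\Omega$ would be the weighted graph Laplacian, which is positive semidefinite with kernel exactly the constants, forcing $\Omega\p=0$ to fail. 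To get a genuine self-stress you must adjoin the cone point $p_0$ with \emph{negative} stresses on the radial members, and then $\Omega$ has mixed signs and its positive semidefiniteness is precisely the open super-stability question for coned polytopes; the paper notes that even pre-stress stability in the circumscribed case is ``coming,'' i.e.\ not yet established. Your appeal to a ``spherical Maxwell--Cremona'' statement is therefore assuming something strictly stronger than anything available.

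There is also a structural reason the energy method cannot prove this particular conjecture as you have set it up: the argument is homotopy-blind, while the conjecture is a statement about uniqueness \emph{within a homotopy class}. The paper's pentagram example (Figure \ref{fig:Pentagrams}) is a tight configuration carrying a positive equilibrium stress on every cable for which \emph{all} edges can be simultaneously shortened by passing to the degree-one unwrapping. If a positive stress yielded $\Omega\succeq 0$ with four-dimensional kernel, your step three would forbid any such shortening outright, so the PSD claim is false in the stated generality; any correct proof must make the degree enter essentially, as it does in the paper's own proof of Theorem \ref{Theorem:tensegrity}, where the homotopy class fixes the total signed area at $4\pi$ and Corollary \ref{lemma:congruence} forces each face to be at its unique area maximum. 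Two smaller points: the Stiemke/Gordan step only identifies first-order Pareto criticality with existence of a nonnegative stress, which is neither implied by nor implies local minimality in the partial order (higher-order flexes are possible); and the final claim that an affine map carrying $\p$ to $\q$ with all points on $\mathbb{S}^2$ and all edge lengths preserved must be a rotation needs the usual condition that the edge directions do not lie on a conic at infinity, which you have not verified.
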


\begin{remark}
In the plane, one of the methods of indicating when a $3$-connected planar graph $G$ has an equilibrium stress is to exhibit what is called a ``reciprocal diagram" $H$, whose faces, edges, and vertices correspond to the vertices, edges, and faces of $G$, and corresponding edges are perpendicular.  For example, configurations of the  graphs of a cube and octahedron can be reciprocals.  It is interesting on the unit sphere for an inscribed convex polytope, when the circumcenter of each face, projected out to the sphere,  is taken as the corresponding vertex of the reciprocal, the corresponding (geodesic) edges are perpendicular. 

Indeed in \cite{Verdier} and \cite{Lovasz} it is shown that the spherical spiderweb that is $3$-connected and planar with a positive stress corresponds to the projection of a convex polytope  along with its  corresponding reciprocal spiderweb, where the two polytopes are polar duals in $3$-space.  

\end{remark}

\section{Examples}

Any inscribed polytope $P$ in $\R^3$ with circumcenters inside its corresponding faces will satisfy the conditions of Theorem \ref{Theorem:tensegrity} and be congruent to $P$ by rotation. For example, the regular cube with its cone point in the center of symmetry will be unique in the degree one realizations in the unit sphere subject to its cable constraints.

\bigskip

 Figure \ref{fig:Slack-cube} shows a side view of the edge graph of a cube, where the vertices lie on a sphere.
The red edges are slack in the sense that the stress on those interval edges must be zero. Without the red edges, the three geodesics from $N$ to $\mathbb{S}$ can be rotated so that all of the geodesics are in one closed hemisphere and then homotoped to a point. Note that without the red cables, the notion of a homotopy class is not well-defined, since the lengths of regions on the sphere are not strictly less than $2\pi$, not to mention that the graph is not $3$-connected. 

When the red cables are inserted all the motions are stopped and it is rigid as a cabled framework in $R^3$.  But further, when the red cables are inserted, we can apply Lemma \ref{lemma:closed-hemi} to exclude the case when all the regions are in one (closed) hemisphere, since the notion of degree of a map of the sphere is well-defined in that case.  This situation is very similar to the notion of $n$-step universal rigidity in \cite{universal}.  The first step in the case of Figure \ref{fig:Slack-cube} is when the half-geodesics from $N$ to $S$ are stressed, but that part alone is not rigid as mentioned above.  When the slack cables are stressed, it is the second step, and it has its own sense of being in equilbrium. 

 \begin{figure}
\centering\includegraphics[width=0.23\textwidth]{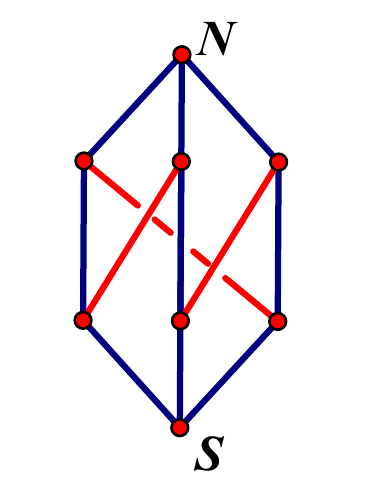}\captionsetup{labelsep=colon,margin=2cm}
         \caption{This shows the projection from the side of a framework, where the edges are shown as straight line segments in $\R^3$ instead of geodesics on the unit sphere.  The four vertices on each the black edge paths from $N$ to $\mathbb{S}$ are all in a single plane including the origin, as well as $N$ and $\mathbb{S}$.}
         \label{fig:Slack-cube}
    \end{figure} 

    \bigskip

Even when just the graph of the polytope is given, it may not be possible to find any realization as a convex polytope where all the vertices lie on a sphere, i.e. where the polytope is inscribed in a sphere.  Figure \ref{fig:Triakis} is one such example.  Note that in this figure, when all the vertices are vertices of a cube, Theorem \ref{Theorem:tensegrity} applies since the circumcenter of each of the triangular faces lies on the boundary of their triangular right-angled face. For a careful discussion of when a polytope is inscribable or, dually circumscribable, see \cite{inscribable}.
\begin{figure}
\centering\includegraphics[width=0.4\textwidth]{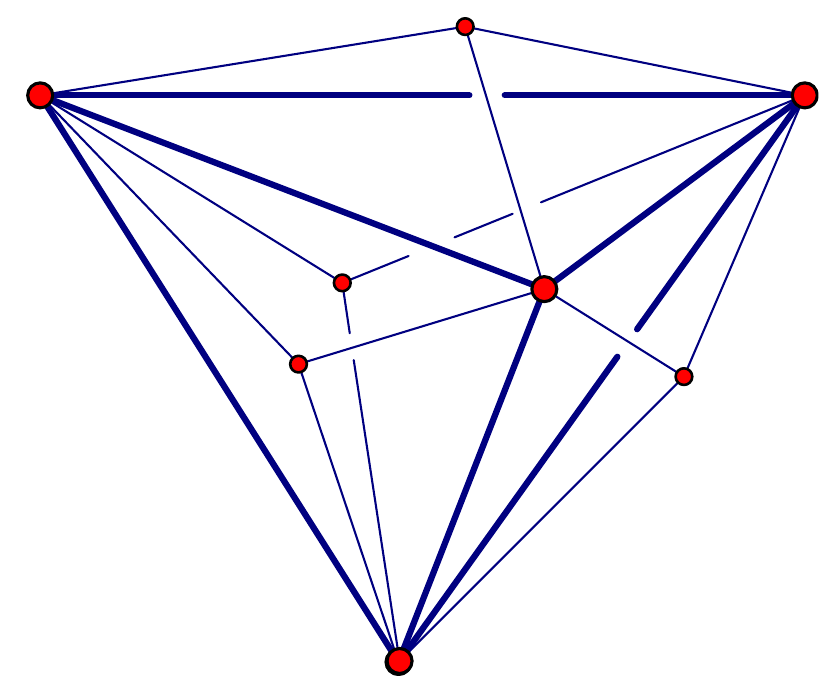}\captionsetup{labelsep=colon,margin=2cm}
         \caption{This is the triakis tetrahedron, where a tetrahedron (with large vertices) has another tetrahedron (with lighter weighed edges) attached to each of its faces, creating another convex polytope. See the Wikipedia article triakis tetrahedron for another figure.}
         \label{fig:Triakis}
    \end{figure}

Note that even when a  polytope is inscribed in a sphere, with all faces triangles, it may not be possible to apply Theorem \ref{Theorem:tensegrity}, which assumes that the center of the inscribed polygonal faces be inside or on the boundary of that face.  If one projects the center of the sphere orthogonally into the plane of the triangle, that condition implies that each triangle of the polytope is acute.  In Winter's result \cite{Winter} that is not necessary, and the uniqueness also follows since a triangle is always planar.

    \bigskip

Another example is when there is a corresponding map to the sphere of higher degree than one.  Namely, Figure \ref{fig:Pentagrams} shows two parallel pentagrams in $\mathbf{R}^3$ joined by $5$ rectangular faces.  When the pentagrams are extended to maps to the sphere, and similarly for the rectangular faces, they correspond to a degree two map of $\mathbf{S}^2$ to itself. Because of the symmetry, there is an equilibrium stress that is  positive on all the cable edges.  There is an ``unwrapping map'' that shortens all the edge lengths, but that is a degree one map from the sphere to itself.
\begin{figure}[!htb]
    \centering
    \includegraphics[width=0.4\textwidth]{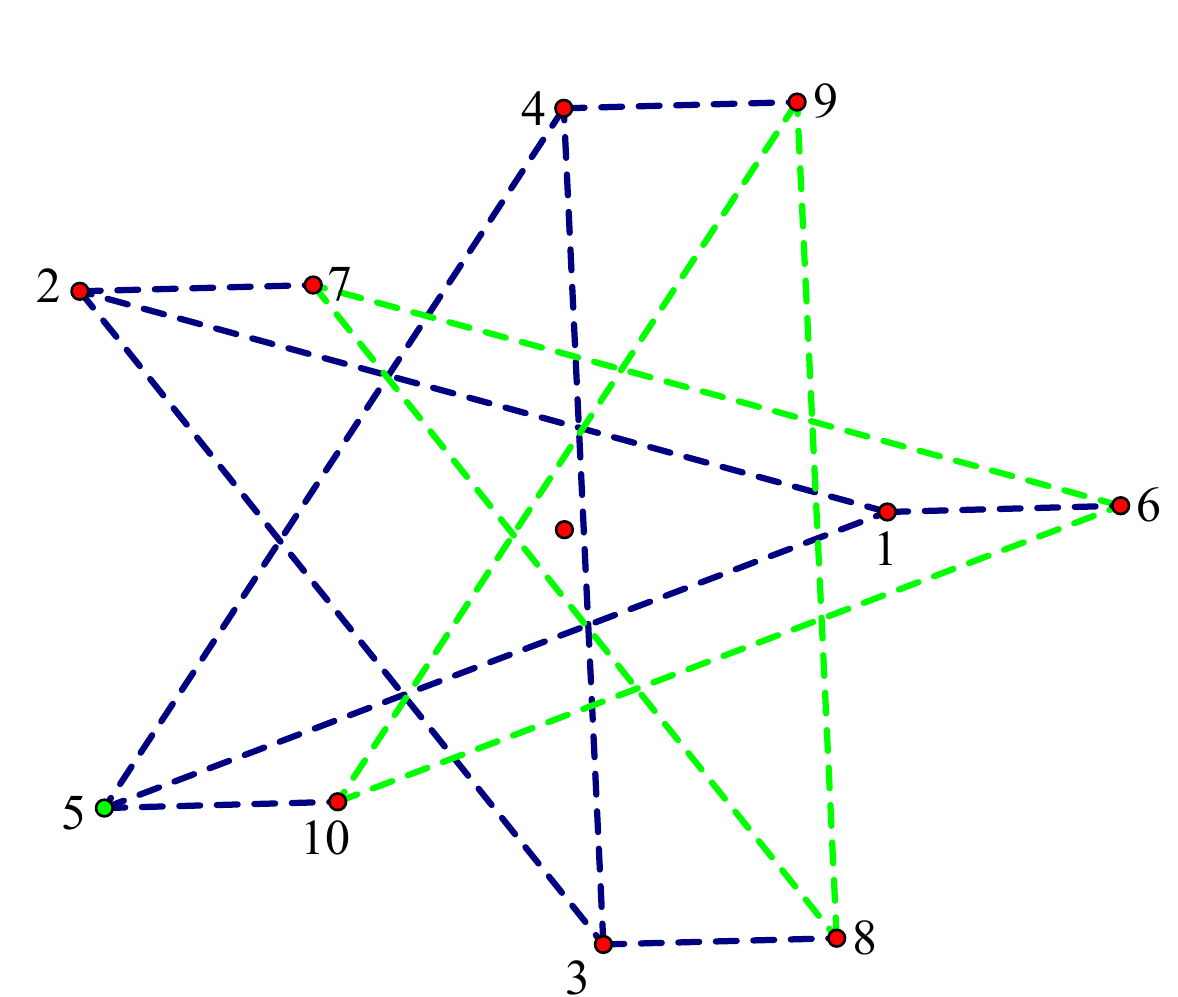}\captionsetup{labelsep=colon,margin=2cm}
         \caption{This shows the pentagrams and their connecting edges that correspond to a degree $2$ map of $\mathbf{S}^2$.
         }\label{fig:Pentagrams}
    \end{figure}

\bigskip

A lot of the ideas for the spider webs on the sphere are similar to the situation for spider webs on compact surfaces of constant negative curvature.  See \cite{Harmonic-Maps} or 

\cite{Iso-surfaces}, for example.  

\bigskip

We thank Tom Banchoff for discussions over the years past about the isoperimetric problem.

\newpage

\bibliographystyle{abbrvnat}
\bibliography{Iso.bib}

\end{document}